\tikzset{vertex/.style={circle,draw=blue!50,fill=blue!15,thick,minimum size=5mm,inner sep=0.5mm}} 
\tikzset{block/.style={vertex,rectangle}}
\tikzset{squigg/.style=decorate,decoration={snake,amplitude=0.5mm,segment length=1.5mm}} 
\tikzset{dreieck/.style={shape=isosceles triangle,%
  isosceles triangle apex angle=60,shape border rotate=90,fill=black!15,draw=black,thick,minimum width=1cm}}
\tikzset{>=stealth}
\newtheorem{theorem}{Theorem}
\newtheorem{remark}{Remark}
\newtheorem{definition}{Definition}
\newcommand{\R} {\mathbb{R}}
\newcommand{\dd}{\,\mathrm{d}}
\newcommand{\Wad}{W_\text{ad}}
\newcommand{\calL}{\mathcal L} 
\newcommand{\calH}{\mathcal H}
\newcommand{\calJ}{\mathcal J}
\newcommand{\cost}{c}
\newcommand{\lb}{x^{\ell}}
\newcommand{\ub}{x^u}
\newcommand{\zin}{\hat{z}}
\newcommand{\ux}{u^x}
\title{A new perspective on dynamic network flow problems via 
	port-Hamiltonian systems}
\author{%
Onur~Tanil~Doganay%
\thanks{University of Wuppertal,
        Institute of Mathematical Modelling, Analysis and Computational Mathematics
        ({doganay@uni-wuppertal.de},
        {klamroth@uni-wuppertal.de},
        {lang@uni-wuppertal.de},
        {stiglmayr@uni-wuppertal.de},
        {totzeck@uni-wuppertal.de})
        }%
\and
Kathrin~Klamroth%
\footnotemark[1]%
\and
Bruno~Lang%
\footnotemark[1]%
\and
Michael~Stiglmayr%
\footnotemark[1]%
\and
Claudia~Totzeck%
\footnotemark[1]%
}
\date{}
\begin{document}

\maketitle
\begin{abstract}
  We suggest a global perspective on dynamic network flow problems that takes advantage of the similarities to port-Hamiltonian dynamics. Dynamic minimum cost flow problems are formulated as open-loop optimal control problems for general port-Hamiltonian systems with possibly state-dependent system matrices. We prove well-posedness of these systems and characterize optimal controls by the first-order optimality system, which is the starting point for the derivation of an adjoint-based gradient descent algorithm. Our theoretical analysis is complemented by a proof of concept, where we apply the proposed algorithm to static minimum cost flow problems and dynamic minimum cost flow problems on a simple directed acyclic graph. We present numerical results to validate the approach. 
\end{abstract}

\textbf{keywords:} 
port Hamiltonian systems $\bullet$ ordinary differential equations $\bullet$ optimal control $\bullet$ minimum cost network flow $\bullet$ dynamic network flow.

\textbf{AMS}:
93B70 $\bullet$ 
49J15 

\section{Introduction}\label{sec:intro}
Network flow problems occur in a variety of applications, including routing problems, supply chain management applications, and combinatorial optimization problems like matching and assignment problems. We refer to \cite{ahuja93network} for a comprehensive introduction. 
In this paper, we focus on minimum cost flow problems. 
In the static and linear case, minimum cost flow problems seek to send a
constant amount of flow from a finite set of supply nodes to a finite set of demand nodes through a directed graph at minimum cost, while obeying the flow conservation constraints. The edges of the graph usually have some upper and lower capacities limiting the amount of flow, and each unit of flow induces a fixed cost on every edge. 
Static linear minimum cost flow problems \eqref{eq:MCNF} have a simple linear programming formulation
\begin{equation}\tag{MCFP}\label{eq:MCNF}
	\begin{array}{rr@{\extracolsep{0.75ex}}c@{\extracolsep{0.75ex}}l}
		\min &\multicolumn{3}{l}{\displaystyle  \cost^\top x} \\ 
		\text{s.\,t.} & A\,x &=& b\\
		& \lb &\leq &x \leq \ub.
	\end{array}
\end{equation}
Here, $x$ is the vector of (unknown) flows on the edges, $c$ is the vector of flow costs (per unit of flow), $A$ is the node arc incidence matrix of the underlying graph, $b$ is the vector of supplies and demands at the nodes, and $x^\ell$ and $x^u$ are lower and upper flow bound constraints, respectively.
Static minimum cost flow problems can be solved efficiently, for example, by the network simplex algorithm or by specific algorithms tailored, e.g., for sparse or dense networks. We refer again to \cite{ahuja93network} for a comprehensive introduction to the field, and to \cite{kova:mini:2015} for a numerical comparison of a variety of solution methods. 

In many practical applications, supplies and demands vary over time, hence requiring dynamic network flow models. These can also be characterized by a set of flow conservation constraints, that typically have to be satisfied at all points in time.  See, for example, \cite{aron:asur:1989,kotnyek03annotated,pageni21survey,pyakurel20network,skutella09introduction} for introductory surveys on dynamic network flow problems and for an array of potential applications.

An established way to handle flows over time is based on time-discretization and on the construction of so-called time-expanded networks, see, for example,  \cite{fleischer07quickest,Gross14,skutella09introduction}. By generating a copy of the (static) network for each time step and introducing arcs that reflect travel times between nodes by spanning over different time steps, dynamic network flows can be approximated by static network flows that can be computed using classical methods. However, this comes at the cost of a largely increased network complexity -- and hence computational time -- and becomes impractical for large real-world applications. Moreover, it is difficult to incorporate non-linear and flow dependent edge costs into the framework of time-expanded networks. One possible but rather rough approximation of flow dependent edge costs consists in introducing several (parallel) arcs with limited capacity and increasing costs, so that larger flow amounts are forced to use more expensive edges  \cite{koehler05flows}.

Alternatively, dynamic and non-linear network flows can be modelled by using one dimensional partial differential equations (PDE) on each edge and, if necessary, also ordinary differential equations in each node of the network. This is particularly well-suited whenever a high accuracy is needed and was successfully applied for modelling, simulation \cite{SailerMarheineke2022,egger2018stability,egger2018structure,mehrmann2018model} and optimization \cite{EggerKuglerWollner2017,Burlacu2019,Gugat2005optimal,Gugat2018mip,Gugat2018towards,krug2021optimization,Daensches2023adaptive,jaekle2023optimal,herty2010,Goettlich2016} of gas, traffic or heating networks, see also \cite{gugat2022modeling} for a recent overview in the gas setting and \cite{waternetworks} for more details on water networks. Due to the large computational complexity, however, 
these models are often limited to smaller instances, see the discussion in \cite{Cominetti2018}.  

The port-Hamiltonian flows that are considered in this paper are modelled in terms of ordinary differential equations (ODE), and can be viewed as a compromise between (cheap and inaccurate) discretized linear and static flows and (accurate and complex) flow dynamics based on PDE. We will show that port-Hamiltonian flows cover static as well as dynamic models and allow for a wide range of further modelling options through the system matrices and via potential ports. Further, we incorporate capacity constraints, which require tailored optimal control problems for PHS. We propose gradient-based optimization approaches that are then applied to static and dynamic flow settings. 
Characteristic for (static and dynamic) network flow problems are the flow conservation constraints that guarantee that no flow is added or lost at any node and that the (external) supplies and demands are satisfied. This flow conserving property, that is also sometimes referred to as Kirchhoff's law, is the basis of the suggested port-Hamiltonian formulation.  To showcase the relation of \eqref{eq:MCNF} and the port-Hamiltonian framework we introduce the nodes $\rho$ as additional state. Flow conservation in the nodes leads to the constraint $-A^\top\rho =0$. Hence, the reformulation of the minimum cost flow problem in the extended state space is given by
\begin{equation}\tag{PH-MCFP}\label{eq:PH-MCNF}
	\begin{array}{rr@{\extracolsep{0.75ex}}c@{\extracolsep{0.75ex}}l}
		\min &\multicolumn{3}{l}{\displaystyle  \cost^\top x} \\ 
		\text{s.\,t.} & A\,x &=& b \\ &-A^T\,\rho &=&0\\
		& \lb &\leq &x \leq \ub.
	\end{array}
\end{equation}
Now, we can see the skew-symmetric structure typical for port-Hamiltonian systems considering $z=(\rho,x)$ and $J=\left(\begin{smallmatrix} 0 & A \\ -A^\top & 0\end{smallmatrix}\right)$. Indeed, the equality constraints are given by $Jz = Bu$ with $B = \left(\begin{smallmatrix} I & 0 \\ 0 & 0\end{smallmatrix}\right)$ and $u=(b,0)^\top$. The skew symmetric structure inherent to flow conservation constraints in graphs was observed, for example, in [56], and  will be further explored in Section~\ref{sec:staticflow}. 
As we shall see, port-Hamiltonian systems provide a general framework that can be used to model classical (static) flow conservation constraints as well as a wide range of dynamic network flow formulations, including flow dependent costs and constraints. 

Port-Hamiltonian systems (PHS) were introduced in 1992 by Arjan van der Schaft and Bernhard Maschke in order to formalize the power-conserving coupling of dynamical systems on different domains \cite{S2_MavdS92}. By now, the PHS framework for modelling is well-established in the engineering community, in particular, the inherent features of energy conservation, elegant (de)coupling into submodules and PHS preserving control approaches using system theory are appreciated \cite{schaft2014survey}. Initially, the main focus was on physical systems, but the abstract framework of Dirac structures \cite{courant1990dirac} allows for straightforward generalizations attracting the interest of mathematicians.
In recent years, linear PHS with finite and infinite dimensional states became well-understood \cite{jacob2012linear}. However, there are many open problems concerning nonlinear systems, differential-algebraic PH structures, stochastic PHS and optimization of PHS. 

Concerning the optimization of PHS most approaches in the literature use closed-loop controllers, for example, by constructing a feedback
that, in combination with the original system, yields 
again a PHS structure, see for example \cite{breiten2022structure,wu2020reduced}, \cite{koelsch2021optimal} for a variant based on Control-Lyapunov functions and the survey \cite{nageshrao2016control}. Open loop approaches are very recent \cite{faulwasser2022optimal,maschke2022optimal,schaller2021control,karsai2023manifold} and so far focused on minimizing the energy supply, which naturally leads to solutions with turnpike properties. Since our ultimate goal is the solution of dynamic minimum cost flow problems, we focus on the optimal control of general cost functionals constrained by nonlinear ODE-PHS in the following. Towards this end, we introduce a general class of optimal control problems constrained by nonlinear ODE with PHS structure and analyze the well-posedness. In order to derive a gradient-descent algorithm we compute the first-order optimality system that characterizes the optimal control. We then consider dynamic minimum cost flow problems as an application of this general framework. Here, we exploit the intrinsic flow conservation property of PHS and recall the relation of the incidence matrix $A$ of the network and the skew-symmetric system matrix $J$ as in \eqref{eq:PH-MCNF}, which is well-known in the context of PHS on graphs \cite{schaft2014survey}. We then first verify the new modelling perspective at the (well-understood) special case of \emph{static} minimum cost flow problems, before discussing an instance of a dynamic minimum cost flow problem as a proof of concept for the new approach.

This paper is organized as follows. To pave the way for the port-Hamiltonian formulation of network flow problems, we start in Section~\ref{sec:control} with the formulation of a general class of optimal control problems constrained by ODE systems with PHS structure. As we aim to derive a gradient-descent algorithm we assume that the control space admits a Hilbert structure and prove well-posedness of the state system as well as continuous dependence on the data in Carath\'eodory sense. These results are exploited in the proof of well-posedness of the optimal control problem. Then we derive the first-order optimality conditions which lay the ground for the gradient-descent algorithm. In Section~\ref{sec:staticflow} we discuss the relationship of the general PHS setting, especially PHS on graphs, and the special case of minimum cost network flow problems. To verify the PHS approach at a well-understood problem class, we first solve static minimum cost flow problems with the proposed framework. A proof of concept for dynamic minimum cost flow problems is given in Section~\ref{sec:timeNWF}. Then we draw our conclusion.

\section{Optimal control of ODE with PHS structure}\label{sec:control}
In this section we formally introduce a general class of optimal control problems for PHS that will be central for the PHS-based formulation of dynamic network flow problems. Since most of the results follow from more or less standard arguments, we have moved the technical parts of the derivation to the appendix. For a comprehensive introduction to PHS see, for example, \cite{schaft2014survey}.

Let us assume to have a Hamiltonian $\calH \colon \R^n \rightarrow \R$ of the form
\begin{equation}
	\calH(z(t)) = \frac{1}{2} z(t)^\top Q \,z(t), \qquad Q \in \R^{n\times n},\; Q>0
\end{equation}
and the corresponding port-Hamiltonian system given by
\begin{subequations}\label{eq:phs}
	\begin{align}
		\frac{\dd}{\dd t} z &= (J(z)-R(z))\, Q\, z + B(z)\, u, \qquad z(0) = \zin, \label{eq:PHSstate}\\
		y &= B(z)^\top Q \, z , \label{eq:PHSoutput}
	\end{align}
\end{subequations}
where $J$ and $R$ are locally Lipschitz, in more detail, $J,R \in \text{Lip}_\text{loc}(\R^n, \R^{n\times n})$, with $ J(z)^\top = -J(z)$ for all $z \in \R^n$, $R(z) \ge 0$ for all $z \in \R^n$ and the input matrix $B\in \text{Lip}_\text{loc}(\R^n, \R^{n\times m})$. Note that the output equation \eqref{eq:PHSoutput} is passive, in the sense that $y$ can be easily computed once we have $z$.

\subsection{The optimal control problem}
We consider the task of finding an initial condition $\zin$ or input $u \in L^2(0,T;\R^m)$ (or both) that allows us to drive the dynamics either as close as possible to a desired state $z_\text{des} \in H^1(0,T;\R^n)$ 
or to find a dynamic that satisfies, for example, given supplies and demands at minimum cost in a network flow sense.
To this end, we let $w=(u,\zin)$ and propose an optimal control problem given by
\begin{equation}\label{eq:opt_problem}\tag{P}
	\min\limits_{(z,w) \in Z\times \Wad} \calJ(z,w) \qquad \text{subject to the dynamics }  \eqref{eq:phs},
\end{equation}
where
\[
\calJ(z,w):=\int_0^T \cost(z(t), z_\text{des}(t)) \dd t +\cost_T(z(T),z_{\text{des}}(T)) + \frac{\lambda}{2}\, \| w \|^2 
\]
with cost functions $c,c_T:\R^n\times\R^n\rightarrow\R$. $Z$ is the state space (see below for a specification), and $\Wad$ is the set of admissible controls, which needs to be specified individually for each problem at hand. The first two terms of the cost functional allow us to define the main objective of the problem by penalizing undesired states over
the entire time interval $[ 0, T ]$ or at the final time $T$, while the third term is optional. It is needed in case the set of admissible controls is unbounded.

For the derivation of the first order optimality system and the gradient, it is useful to have Riesz representation theorem, which requires controls from a Hilbert space. We therefore choose $u \in L^2(0,T;\R^m)$ in the following and consider solutions to the state system in Carath\'eodory sense.

Let $u \in L^2(0,T;\R^m)$ and $\zin \in \R^n$. Then the right-hand side of \eqref{eq:PHSstate} given by
$$f(t,z) = (J(z) - R(z))\,Q\, z + B(z)\, u(t)$$
is defined on the rectangle $\Omega = \{ (t,z) \colon 0 \leq t \leq T, |z-\zin| \leq b \}$ for $b>0$. Moreover, it is a Carath\'eodory function, i.e., it holds
\begin{enumerate}[label={(\alph*)}] 
	\item $f(t,z)$ is continuous in $z$ for each fixed $t$, 
	\item $f(t,z)$ is measurable in $t$ for each fixed $z$,
	\item there is a Lebesgue-integrable function $m_\Omega \colon [0,T] \rightarrow [0,\infty)$ such that $$|f(t,z)| \le m_\Omega(t) \text{ for all } (t,z) \in \Omega.$$ 
\end{enumerate}
Indeed, (a) holds by the assumptions on $J,R,Q$ and $B$; and (b) holds by the assumption on $u.$ To show (c) we note that it holds
\[
|f(t,z)| \le \max\limits_{z \colon |z-\zin| \le b } \Big( \| J(z) \| + \|R(z)\| \Big) \|Q\|\,  |z| + \max\limits_{z \colon |z-\zin| \le b } \| B(z) \| \,|u(t)| =: m_\Omega(t). 
\]
Furthermore, the assumptions on the system matrices yield the existence of a function $k_\Omega(t)$ such that
\[
|f(t,z) - f(t,\tilde{z})| \leq k_\Omega(t) \, |z - \tilde{z}|, \qquad (t,z),(t,\tilde{z}) \in \Omega.
\]
Altogether we obtain the well-posedness of the state equation, see \cite[Theorem 5.3]{hale2009ordinary} for details.
\begin{theorem}\label{thm:existence_state}
	Let $J,R,Q,B$ and $u$ as above. Then for any $\zin$ there exists a unique absolutely continuous solution $z$ satisfying \eqref{eq:phs} except on a set of Lebesgue measure zero and $z(0)=\zin$.
\end{theorem}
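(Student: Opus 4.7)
The authors have already done almost all of the preparatory work: they verified that $f(t,z)=(J(z)-R(z))Qz+B(z)u(t)$ is a Carathéodory function on the rectangle $\Omega$, that it admits an integrable majorant $m_\Omega$, and that it satisfies a local Lipschitz estimate in $z$ with integrable constant $k_\Omega$. My plan is therefore to invoke the classical Carathéodory existence and uniqueness theorem (as stated, e.g., in the cited monograph \cite{hale2009ordinary}) to obtain a unique absolutely continuous local solution on some maximal subinterval $[0,T^*)\subseteq[0,T]$, and then to rule out finite-time blow-up so that $T^*=T$.

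Concretely, I would first apply the Carathéodory theorem on the rectangle $\Omega$ to obtain a unique absolutely continuous $z\colon[0,\tau]\to\R^n$ with $z(0)=\zin$ satisfying \eqref{eq:PHSstate} almost everywhere, for some $\tau>0$. Uniqueness on $[0,\tau]$ follows from the local Lipschitz estimate combined with Grönwall's inequality applied to $|z(t)-\tilde z(t)|$. A standard continuation argument then yields a unique maximal absolutely continuous solution defined on a half-open interval $[0,T^*)$ with $T^*\le T$, with the property that either $T^*=T$ or $|z(t)|\to\infty$ as $t\uparrow T^*$.

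The main obstacle is the global extension step, i.e.\ excluding blow-up. Here I would exploit the port-Hamiltonian structure and use the Hamiltonian $\calH(z)=\tfrac12 z^\top Q z$ as a Lyapunov-type quantity. Differentiating along a solution, using $J(z)^\top=-J(z)$ (so that $(Qz)^\top J(z)(Qz)=0$) and $R(z)\ge 0$, gives the dissipation inequality
\begin{equation*}
\frac{\dd}{\dd t}\calH(z(t))
=-(Qz)^\top R(z)(Qz)+(Qz)^\top B(z)\,u(t)
\le (Qz)^\top B(z)\,u(t)
\end{equation*}
almost everywhere. With $Q>0$ we have $|z|^2\le\tfrac{2}{\lambda_{\min}(Q)}\calH(z)$, so on any bounded set the local Lipschitz bound on $B$ yields a linear-in-$|z|$ majorant of the right-hand side. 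An application of Young's inequality and Grönwall's lemma (in integral form, to accommodate $u\in L^2(0,T;\R^m)$) then produces an a priori bound of the form $\sup_{t\in[0,T^*)}\calH(z(t))\le C(T,\|u\|_{L^2},\zin,Q,R,B)<\infty$, which contradicts blow-up at $T^*<T$.

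Combining these pieces, the maximal solution extends to all of $[0,T]$, is absolutely continuous, satisfies \eqref{eq:PHSstate} almost everywhere, and is unique in that class with $z(0)=\zin$; the output $y$ is then defined pointwise through \eqref{eq:PHSoutput}. The main technical subtlety is that $B$ is only locally Lipschitz rather than globally bounded, so some care is needed in turning the formal dissipation inequality into a genuine a priori bound; once this is done, everything else reduces to quoting the Carathéodory framework.
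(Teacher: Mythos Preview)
The paper's own ``proof'' consists only of the preparatory verifications you mention (Carath\'eodory conditions plus the local Lipschitz estimate) followed by a bare citation to Hale; it does not spell out any continuation argument or a~priori bound. In that sense your proposal is strictly more detailed than what appears in the paper, and your local existence and uniqueness part matches the paper's route exactly.

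Where you diverge is that you take seriously the passage from a local to a global solution on $[0,T]$, which the paper simply asserts (it writes ``In particular, Theorem~\ref{thm:existence_state} yields $z\in H^1(0,T;\R^n)$'' immediately after citing Hale). Your idea of using the Hamiltonian $\calH$ and the dissipation inequality to bound $|z|$ along the maximal solution is exactly the natural port-Hamiltonian argument. The caveat you raise at the end is not a mere technicality, though: with only $B\in\text{Lip}_\text{loc}$ you cannot upgrade ``on any bounded set $B$ grows at most linearly'' to a global estimate, and the Gr\"onwall step needs precisely that. A locally Lipschitz $B$ with superlinear growth (say $B(z)=(1+|z|^2)I$) is compatible with the paper's standing hypotheses yet breaks the energy bound. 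So the gap you flag is real, and it is shared by the paper, which does not impose any growth condition on $B$ (or on $J,R$) beyond local Lipschitz continuity. Adding an at-most-linear-growth assumption on $B$ (and, if one prefers a direct Gr\"onwall on $|z|$ rather than the energy, also on $J,R$) makes your argument go through cleanly; without it, neither your proof nor the paper's citation actually delivers global existence on $[0,T]$.
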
 
In particular, Theorem~\ref{thm:existence_state} yields $z\in H^1(0,T;\R^n)$. Hence, we define the state space $Z = H^1(0,T;\R^n)$. Let $W := L^2(0,T;\R^m) \times \R^n$, then we define the
\textit{control-to-state map}
\[
S \colon W \rightarrow Z, \qquad w=(u,\zin) \mapsto z.
\]
Furthermore, we use $S$ to define the reduced cost functional
\[
\hat \calJ(w) := \calJ(S(w),w).
\]
We emphasize that using $\hat \calJ$ we treat the state constraint of the optimization problem \eqref{eq:opt_problem} implicitly. 
In fact, we will derive an optimization algorithm based on the reduced form and only update the controls $w$ to drive the dynamics to reduce the objective. 

The next theorem shows the boundedness of the states with respect to the controls.
\begin{theorem}\label{thm:boundedness_state}
	Let $J,R,Q,B$ and $u$ as above. Then the state solution $z$ to \eqref{eq:phs} is bounded by the control $w$. In more detail, there exists a constant $C>0$ such that
	\[
	\| z \|_{H^1(0,T;\R^n)} \leq C \,\| w \|_W.
	\]
\end{theorem}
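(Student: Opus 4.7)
The plan is to use the Hamiltonian $\calH(z) = \tfrac12 z^\top Q z$ as a Lyapunov-like functional. Differentiating along solutions of \eqref{eq:phs} gives
\[
\frac{\dd}{\dd t}\calH(z(t)) = z^\top Q\,[(J(z)-R(z))Q z + B(z)u] = -z^\top Q R(z) Q z + z^\top Q B(z) u,
\]
where the skew-symmetry $J(z)^\top = -J(z)$ makes the $J$-term vanish and $R(z)\ge 0$ renders the first summand non-positive. Young's inequality on the input term, combined with $|Qz|^2 \le 2\lambda_{\max}(Q)\calH(z)$, yields a differential inequality of the form
\[
\frac{\dd}{\dd t}\calH(z(t)) \le \alpha\,\calH(z(t)) + \beta\,\|B(z(t))\|^2\,|u(t)|^2.
\]
Assuming $\|B(z)\|$ is uniformly bounded (which must be guaranteed by the admissible class, e.g.\ by a sublinear growth condition on $B$, or by working on a sufficiently large invariant rectangle obtained from Theorem~\ref{thm:existence_state}), Grönwall's lemma gives
\[
\calH(z(t)) \le e^{\alpha T}\bigl(\calH(\zin) + \tilde\beta\,\|u\|_{L^2(0,T;\R^m)}^2\bigr),
\]
and since $Q>0$ we deduce a uniform bound $\|z\|_{L^\infty(0,T;\R^n)}^2 \le C_1(|\zin|^2 + \|u\|_{L^2}^2) = C_1\|w\|_W^2$, which in particular implies $\|z\|_{L^2(0,T;\R^n)} \le \sqrt{T}\,C_1^{1/2}\,\|w\|_W$.

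For the $H^1$-part I would then estimate $\dot z$ directly from \eqref{eq:PHSstate}. Since $z(t)$ now lies in the ball of radius $C_1^{1/2}\|w\|_W$ for all $t\in[0,T]$, the local Lipschitz continuity of $J,R,B$ makes them uniformly bounded on the (compact) image of $z$, say by a constant $M$ depending monotonically on $\|w\|_W$. Then
\[
|\dot z(t)|^2 \le 2 M^2 \|Q\|^2\,|z(t)|^2 + 2 M^2\,|u(t)|^2,
\]
and integrating in $t$ combined with the previous bound gives $\|\dot z\|_{L^2(0,T;\R^n)}^2 \le C_2\|w\|_W^2$. Adding this to the $L^2$-estimate for $z$ produces the claimed inequality $\|z\|_{H^1(0,T;\R^n)} \le C\|w\|_W$.

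The main obstacle is the \emph{locally} Lipschitz nature of $J,R,B$: one does not automatically have a global bound on $\|B(z)\|$ needed for the energy estimate, nor on the coefficient matrices needed for the $\dot z$-estimate. Strictly speaking one must either invoke an implicit structural hypothesis (e.g.\ $\|B(z)\|\le c_B(1+|z|)$ together with $R\ge 0$, so that the Grönwall step still closes), or run a bootstrap: use Theorem~\ref{thm:existence_state} locally, show via the Hamiltonian estimate that the solution cannot leave a ball of radius $\mathcal O(\|w\|_W)$ in finite time, extend to $[0,T]$, and only then convert the local bounds on the matrices into the desired uniform estimates. The constant $C$ ultimately depends on $T$, $\lambda_{\max}(Q)/\lambda_{\min}(Q)$, and the uniform bounds of $J,R,B$ on the region determined by $\|w\|_W$.
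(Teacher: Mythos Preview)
Your argument is correct and takes a genuinely different route from the paper. The paper does \emph{not} exploit the port-Hamiltonian structure at all: it writes $z(t)=\zin+\int_0^t[(J-R)Qz+Bu]\,\dd s$, squares, applies Young/Jensen, and runs Gr\"onwall on $|z(t)|^2$ directly. This requires a uniform bound on $\|J(z(s))\|+\|R(z(s))\|+\|B(z(s))\|$ along the trajectory, which the paper obtains by invoking continuity of the already-constructed solution on $[0,T]$ (so the constant $C_0$ implicitly depends on the particular solution). Your energy approach is more in the spirit of PHS: by differentiating $\calH(z)=\tfrac12 z^\top Qz$, the skew-symmetry of $J$ and the sign of $R$ eliminate those two matrices from the $L^2$/$L^\infty$ estimate entirely, so only a bound on $\|B(z)\|$ is needed at that stage. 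This is cleaner and makes the dependence of the constants more transparent.

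On the obstacle you flag: you are right that the locally Lipschitz hypothesis does not by itself provide a uniform bound on $B(z)$ (or on $J,R$ for the $\dot z$ step), and your suggested bootstrap is the honest way to close this. It is worth noting that the paper's proof has exactly the same issue---its constant $C_0$ is obtained a posteriori from compactness of the image of the given solution, so the resulting $C$ is not manifestly uniform in $w$ either. For the application in Theorem~\ref{thm:optimal_input} one only needs $C$ to be uniform on bounded subsets of $W$, which both arguments deliver once the bootstrap (or an explicit sublinear-growth assumption on $B$) is spelled out; your write-up is simply more candid about this point.
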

The proof of Theorem~\ref{thm:boundedness_state} is provided in Appendix~\ref{app:boundedness_state}.

Before we begin with the theorem on the existence of optimal controls, we note that $\lambda > 0$ implies that the cost functional is coercive with respect to $w$ and thus a minimizing sequence is bounded. Moreover, we introduce the state operator $$e \colon H^1(0,T;\R^n) \times W \rightarrow X^* \times \R^n$$ implicitly as 
\begin{align*}
	\big\langle e(z,w), (\begin{smallmatrix}\varphi\\\varphi_0\end{smallmatrix}) \big\rangle_{X^*\times \R^n,X\times \R^n} &= \int_0^T \Big( \frac{\dd}{\dd t} z - (J(z)-R(z))\, Q\, z - B(z)\, u \Big) \cdot \varphi \dd t \\ &\qquad \qquad + (z(0) - \zin ) \cdot \varphi_0
\end{align*}
where $ \varphi \in X=L^2(0,T;\R^n)$ and \(\varphi_0\in\R^n\). 

To state the result on the existence of an optimal control, we introduce the notion of weak continuity.
\begin{definition}
	Let $W,Z$ be Banach spaces. A mapping $A \colon W \rightarrow Z$ is \textit{weakly continuous}, if $w_k \rightharpoonup w$ in W implies $A(w_k) \rightharpoonup A(w)$ in $Z$.
\end{definition}
\begin{theorem}\label{thm:optimal_input}
	Let $\lambda>0$ or $\Wad \subset W$ closed and bounded. Further, let $J, R, B$ be weakly continuous and $\cost(z,z_\text{des})$ continuous and convex w.r.t.~$z$ and $\cost_T(z(T),z_\text{des}(T))$  continuous and convex w.r.t.~$z(T)$. Then there exists a solution to \eqref{eq:opt_problem}. 
\end{theorem}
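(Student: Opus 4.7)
The plan is to apply the direct method of the calculus of variations to the reduced problem $\min_{w\in\Wad}\hat\calJ(w)$. I would take a minimizing sequence $(w_n)_n\subset\Wad$ with $w_n=(u_n,z_{0,n})$ and associated states $z_n:=S(w_n)\in Z$. The first step is to establish boundedness: if $\lambda>0$ the penalty $\tfrac{\lambda}{2}\|w\|^2$ forces $(w_n)_n$ to be bounded in $W$, while if instead $\Wad$ is bounded this is immediate. Theorem~\ref{thm:boundedness_state} then delivers a uniform $H^1(0,T;\R^n)$-bound on $(z_n)_n$.

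Second, reflexivity of $W$ and $H^1(0,T;\R^n)$ yields (up to subsequences) weak limits $w_n\rightharpoonup w^\star=(u^\star,z_0^\star)$ in $W$ and $z_n\rightharpoonup z^\star$ in $H^1(0,T;\R^n)$. Admissibility $w^\star\in\Wad$ follows from weak closedness of $\Wad$ (closed and bounded plus convexity in the second scenario, or $\Wad=W$ in the coercive case). The compact embedding $H^1(0,T;\R^n)\hookrightarrow C([0,T];\R^n)$ upgrades this to uniform convergence $z_n\to z^\star$, so that $J(z_n)\to J(z^\star)$, $R(z_n)\to R(z^\star)$, $B(z_n)\to B(z^\star)$ uniformly on $[0,T]$ thanks to the (local Lipschitz, and where needed weak) continuity of $J,R,B$. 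I can then pass to the limit in the weak form of \eqref{eq:PHSstate}: the nonlinear term $(J(z_n)-R(z_n))\,Q\,z_n$ converges strongly in $L^2(0,T;\R^n)$, the mixed term $B(z_n)\,u_n$ converges weakly in $L^2(0,T;\R^n)$ (strong times weak), and $\tfrac{\dd}{\dd t}z_n\rightharpoonup\tfrac{\dd}{\dd t}z^\star$ weakly in $L^2$. The initial condition passes through the continuous trace $z\mapsto z(0)$, so that $z^\star=S(w^\star)$.

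Third, to close the argument I would invoke weak lower semicontinuity of $\hat\calJ$. The term $\tfrac{\lambda}{2}\|w\|^2$ is weakly lsc because the Hilbert norm is. The tracking contributions $\int_0^T c(z_n(t),z_\text{des}(t))\dd t$ and $c_T(z_n(T),z_\text{des}(T))$ are continuous and convex in the state variable, hence weakly lsc along $z_n\rightharpoonup z^\star$; for the terminal cost one additionally uses weak continuity of the evaluation $H^1(0,T;\R^n)\ni z\mapsto z(T)\in\R^n$. Combining these facts gives $\hat\calJ(w^\star)\le\liminf_{n\to\infty}\hat\calJ(w_n)=\inf_{w\in\Wad}\hat\calJ(w)$, so $(z^\star,w^\star)$ is a minimizer of \eqref{eq:opt_problem}.

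The main obstacle I expect is the passage to the limit in the nonlinear state equation, namely the identification $z^\star=S(w^\star)$. Weak convergence of $z_n$ alone is insufficient for the bilinear-type expressions $J(z_n)\,Q\,z_n$, $R(z_n)\,Q\,z_n$ and $B(z_n)\,u_n$; the decisive tool is the compact embedding $H^1\hookrightarrow C$, which converts weak convergence of the states into strong (uniform) convergence, and care must be taken only with the strong-times-weak product $B(z_n)\,u_n\rightharpoonup B(z^\star)\,u^\star$. This is precisely where the weak continuity of $J,R,B$ assumed in the theorem enters and legitimises the limit passage.
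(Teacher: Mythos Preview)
Your argument is correct and follows the same overall strategy as the paper: the direct method applied to the reduced functional, with boundedness of a minimizing sequence, weak compactness in $W$ and $H^1$, identification of the weak limit as a state--control pair via the operator $e$, and weak lower semicontinuity of $\calJ$ from convexity and continuity.

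The one noteworthy difference is in how you pass to the limit in the state equation. The paper invokes the assumed \emph{weak continuity} of $J,R,B$ directly and shows that each of the four relevant integrals tested against $\varphi\in L^2(0,T;\R^n)$ vanishes, then uses $\|e(\bar z,\bar w)\|\le\liminf_k\|e(z_{n_k},w_{n_k})\|=0$. You instead exploit the compact embedding $H^1(0,T;\R^n)\hookrightarrow C([0,T];\R^n)$ to upgrade the weak convergence of the states to uniform convergence, which turns the nonlinear terms $(J(z_n)-R(z_n))Qz_n$ into strongly convergent sequences and reduces the only delicate product to the strong--weak pairing $B(z_n)u_n\rightharpoonup B(z^\star)u^\star$. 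Your route is arguably more transparent in finite dimensions, since it makes clear that the ``weak continuity'' hypothesis on $J,R,B$ is really only needed in the guise of ordinary continuity together with compactness of the state trajectories; the paper's formulation keeps the argument at the level of the abstract assumption. Either way the conclusion is the same, and your handling of the terminal cost via the continuous trace $z\mapsto z(T)$ mirrors what the paper does implicitly.
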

The proof of Theorem~\ref{thm:optimal_input} is given in Appendix~\ref{app:optimal_input}.

In the following, we formally compute the optimality system that allows us to characterize candidates for optimal controls. We remark that more regularity of the system matrices is required to make the following results rigorous.

\subsection{First-order optimality system}

We use a Lagrangian approach to derive the first-order optimality system formally. We therefore introduce the Lagrangian corresponding to \eqref{eq:opt_problem} as
\begin{equation*}
	\calL(z,w,\varphi) = \calJ(z,w) - \big\langle e(z,u), (\begin{smallmatrix}\varphi\\\varphi_0\end{smallmatrix}) \big\rangle_{X^*\times \R^n,X\times \R^n}.
\end{equation*}
The first-order optimality system is now characterized by the solution of $\dd\calL =0$. In more detail,
\[
\dd_\varphi \calL = \frac{\dd}{\dd t} z - (J(z)-R(z))\, Q\, z - B(z)\, u = 0, \qquad \dd_{\varphi_0} \calL = z(0) - \zin = 0,
\]
which is the state equation. For the adjoint equation we compute for an arbitrary direction $h \in Z$
\begin{align*}
	\dd_z\calL(z,w,\varphi) [h] &= \int_0^T \cost'(z,z_\text{des})\cdot h \dd t + \cost_T'(z(T),z_\text{des}(T)) \cdot h(T) - h(0) \cdot \varphi_0 \\& \quad - \int_0^T \frac{\dd}{\dd t} h \cdot \varphi - (J'(z)[h] - R'(z)[h])\, Q\, z \cdot \varphi \dd t  \\&\quad -\int_0^T (J(z)-R(z))\, Q\, h \cdot \varphi - B'(z)[h]\, u \cdot \varphi \dd t 
\end{align*}
Assuming $\varphi \in Z$ we integrate by parts to obtain
\begin{align*}
	\dd_z\calL(z,w,\varphi) [h] &= - h(0) \cdot \varphi_0 - \big[ h \cdot \varphi \big]_0^T \\& \quad +\int_0^T \cost'(z,z_\text{des})\cdot h \dd t + \cost_T'(z(T), z_{\text{des}}(T)) \cdot h(T)  \\& \quad - \int_0^T \Big( -\frac{\dd}{\dd t} \varphi -  (J'(z)^*[\varphi \otimes Qz]  - R'(z)^*[\varphi \otimes Qz]) \\
	&\qquad \qquad \qquad \qquad \qquad- Q^\top(J(z)-R(z))^\top\varphi +  B'(z)^* [\varphi \otimes u]\Big) \cdot h\dd t
\end{align*}
Since $h$ is arbitrary we can choose $h(0) = 0 $ and $h(T)=0$ to  identify the adjoint equation with the help of the fundamental lemma of the calculus of variations
\begin{align*}
	-\frac{\dd}{\dd t} \varphi
	&= \cost'(z,z_\text{des})
	+ ( J'(z)^*[\varphi \otimes Qz] + R'(z)^*[\varphi \otimes Qz])
	\\ & \qquad\qquad\quad\ 
	+ Q^\top(J(z)-R(z))^\top\varphi -  B'(z)^* [\varphi \otimes u], \\[1.5ex]
	\varphi(T) &= \cost_T'(z(T), z_{\text{des}}(T)).
\end{align*}
The optimality condition is an inequality as $\Wad$ is possibly bounded. Hence an optimal control $\bar w$ satisfies
\[
\left\langle \dd_w \calL(z,w,\varphi) , w - \bar w \right\rangle \ge 0 \quad \text{ for all } w \in \Wad. 
\]
Following the same steps as above, we obtain
\begin{equation}\label{eq:opt1}
	\begin{array}{r}
		(\lambda \,\bar \zin + \varphi(0)) \cdot (\zin - \bar \zin) + \int_0^T \lambda \,\bar u(t) + B(\bar z)^\top \varphi \cdot (u(t) - \bar u(t)) \dd t \ge 0 \qquad\quad{} \\[0.5ex]
		\text{ for all } w=(u,\zin,) \in \Wad.
	\end{array}
\end{equation}

To derive the gradient of the reduced cost functional $\hat \calJ(w)$ we first compute the G\^ateaux derivative in direction $h$ which yields
\begin{align*}
	\dd_w \hat \calJ(w)[h] &= \int_0^T \dd_z\calJ(z,w) \cdot S'(w)h + \dd_w\calJ(z,w) \cdot h \dd t \\
	&=\int_0^T \big( S'(w)^* \dd_z\calJ(z,w) + \dd_w\calJ(z,w) \big ) \cdot h \dd t \\
	&= \int_0^T \dd_w \calL (z,w,\varphi) \cdot h \dd t .
\end{align*}
Together with \eqref{eq:opt1} this allows us to identify the gradient as
\[
\nabla \hat \calJ(w) = \begin{pmatrix} \lambda\, u(t) + B(z)^\top \varphi \\ \lambda\, \zin + \varphi(0) \end{pmatrix}.
\]
We employ this to propose a projected gradient descent algorithm for the optimal control problem \cite{hinze09opt}.

\begin{algorithm}
	\caption{Gradient descent algorithm for \eqref{eq:opt_problem}}
	\label{alg:gradientdescent}
	\begin{algorithmic}[1]
		\STATE{\textbf{initialize:} feasible initial guess $w,$ PHS functions  $J(z),R(z),B(z),Q$, \\ and other algorithmic parameters}
		\STATE{solve state problem to get  $S(w)$}
		\STATE{solve adjoint problem to get $\varphi$ for given $S(w)$ and $w$}
		\STATE{identify the gradient and project to $\mathcal W_\text{ad}$ to obtain $P_{\mathcal W_\text{ad}}(\nabla \hat\calJ(w))$}
		\WHILE{$|P_{\mathcal W_\text{ad}}(\nabla \hat\calJ(w))|\geq \epsilon_\text{stop}$}
		\STATE{choose appropriate step size $\sigma$ with Armijo rule}
		\STATE{$w \gets w - \sigma\,P_{\mathcal W_\text{ad}}(\nabla \hat\calJ(w))$}
		\STATE{solve state problem to get  $S(w)$}
		\STATE{solve adjoint problem to get $\varphi$ for given $S(w)$ and $w$}
		\STATE{identify the gradient and project to $\mathcal W_\text{ad}$ to obtain $P_{\mathcal W_\text{ad}}(\nabla \hat\calJ(w))$}
		\ENDWHILE
		\RETURN optimized control $w$
	\end{algorithmic}
\end{algorithm}

In the following, we focus on PHS that model dynamic network flows. One important feature is flow conservation. In particular, this will constrain the set of admissible controls and lead to natural projection operators. Before we go into the details and numerical results for static network flow problems, we establish the relationship of the general optimal control for ODEs with PH structure and static and dynamic network flow problems.

\section{Port-Hamiltonian formulation of network flows}\label{sec:staticflow}
In this section we start with a reinterpretation of a standard static network flow problem as a special case of a PHS constrained optimal control problem as introduced in Section~\ref{sec:control}. 
While this can not be expected to lead to competitive solution approaches as compared to well-established network optimization algorithms (see, e.g., \cite{ahuja93network}), the PHS perspective offers a wide array of modelling options that go far beyond static network flows. This will be exemplified with time dynamic network flow problems in Section~\ref{sec:timeNWF}. 

Minimum cost flow problems have an array of applications in operations research and thus have been extensively investigated since the 1960s, see, e.g.,  \cite{ahuja93network,chen22maximum,cruz22survey,ford62flows}. While the original formulation considers a static situation and constant flow costs, there are extensions to the dynamic case with flow dependent costs. 
However, linear network flow models require extensive reformulation and linearization techniques to approximate time dynamics and/or non-linearity of flow costs. To avoid these reformulations which generally lead to a dramatic increase of the network size (in terms of the number of nodes and edges) we suggest a port-Hamiltonian formulation of network flows which inherently covers non-linear costs and time-dynamic flows.

Towards this end, let $G=(V,E)$ be a finite directed and connected graph with node set $V=\{1,\dots,N_v\}$ and edge set $E\subset V\times V$, with $|V|=N_v$  and $|E|=N_e$.
An edge $e\in E$ from node $i$ to node $j$, $i\neq j$, is denoted by $e=(i,j)$.  The node arc incidence matrix $A \in \{-1,0,1\}^{N_v \times N_e}$ contains one column for every edge $e=(i,j)\in E$ such that $a_{ie}=1$, $a_{je}=-1$, and $a_{ke}=0$ for all $k\in V\setminus\{i,j\}$. Note that the rows of $A$ sum to the zero vector since each column contains exactly one entry equal to $+1$ and one entry equal to $-1$. Hence, $A$ is not of full rank. Whenever a full-rank matrix is required in the following, we will omit an arbitrary but fixed row of $A$ (e.g., its last row) which yields the so-called full-rank node arc incidence matrix that  still contains the same information as $A$. 

In a static network flow problem we usually assume that finite supplies and demands $b_i\in\R$  are associated with every node $i\in V$, satisfying $\sum_{i\in V} b_i=0$ (i.e., the sum of all demands equals the sum of all supplies), and that non-negative costs $\cost_e\in\R_+$ are associated with all edges $e\in E$. Moreover, all edges $e\in E$ have associated flow bound constraints $\lb_e$ (lower bounds) and $\ub_e$ (upper bounds), with $\lb_e\leq \ub_e$ for all $e\in E$. If not stated otherwise, we assume that $\lb_e=0$ for all $e\in E$. The goal is then to identify a minimum cost flow solution $x\in\R^{N_e}$ that satisfies the flow bound constraints on all edges and the flow conservation constraints in all nodes in the sense that the difference between inflow and outflow at every node equals the supplies and demands at the respective nodes. Comprising all data for supplies and demands, costs, and capacities by $b\in\R^{N_v}$ and  $\cost,\lb,\ub\in\R_+^{N_e}$, respectively, we recall the linear programming formulation for the static and linear \emph{minimum cost flow problem} \eqref{eq:MCNF} from Section~\ref{sec:intro}:
\begin{equation}\tag{MCFP}
	\begin{array}{rr@{\extracolsep{0.75ex}}c@{\extracolsep{0.75ex}}l}
		\min &\multicolumn{3}{l}{\displaystyle  \cost^\top x} \\ 
		\text{s.\,t.} & A\,x &=& b\\
		& \lb &\leq &x \leq \ub.
	\end{array}
\end{equation}
We will assume throughout this paper that \eqref{eq:MCNF} is feasible. An example problem with one supply node (node $1$) and four demand nodes (nodes $2$ to $5$) is illustrated in Figure~\ref{fig:uebung}. 

\begin{figure}[htb]
	\begin{minipage}{0.535\textwidth}
		\centering
		\begin{tikzpicture}[scale=0.9,-latex,>=stealth,shorten >=1pt,auto,node distance=2cm, main node/.style={circle,draw,font=\small\normalfont},
			every label/.style={font=\footnotesize}]
			
			\node[main node, label=above:{\scriptsize\(b_1=4\)}] (1) at (0,3) {$1$};
			\node[main node, label={80:{\scriptsize\(b_2=-1\)}}] (2) at (2.2,1.6) {$2$};
			\node[main node, label=-60:{\scriptsize\(b_3=-1\)}] (3) at (1,0) {$3$};
			\node[main node, label=-120:{\scriptsize\(b_4=-1\)}] (4) at (-1,0) {$4$};
			\node[main node, label=100:{\scriptsize\(b_5=-1\)}] (5) at (-2.2,1.6) {$5$};
			
			\path[every node/.style={font=\scriptsize}]
			(1) edge node [above right] {\((4,1)\)} (2)
			edge node [left] {\((4,4)\)} (4)
			(2) edge node [pos=0.6,right] {\((3,1)\)} (3)
			(3) edge node [right] {\((2,2)\)} (1)
			edge node [below] {\((3,1)\)} (4)
			(4) edge node [pos=0.4,left] {\((3,1)\)} (5)
			(5) edge node [above left] {\((1,1)\)} (1);
		\end{tikzpicture}
	\end{minipage}
	\begin{minipage}{0.46\textwidth}
		\small\setlength\arraycolsep{3pt}
		\[
		A = \begin{pmatrix} 1 & 1 & 0 & -1 & 0 & 0 & -1 \\
			-1 &  0 & 1 & 0 & 0 & 0 & 0 \\
			0 & 0 & -1 & 1 & 1 & 0 & 0 \\
			0 & -1 & 0 & 0 & -1 & 1 & 0 \\
			0 & 0 & 0 & 0 & 0 & -1 & 1 \end{pmatrix}.
		\]
	\end{minipage}
	
	\caption{Illustrative example of a network flow problem with five nodes, i.e.,  $V=\{1,\dots,5\}$, seven edges $E=\{(1,2),(1,4),(2,3),(3,1),(3,4),(4,5),(5,1)\}$, and node arc incidence matrix $A$. The supplies and demands $b_i$, $i=\{1,\dots,5\}$ are indicated next to the nodes, and the upper capacity bounds and costs $(\ub_e,\cost_e)$ are indicated next to the edges. We assume that all lower bounds are equal to zero ($\lb=0$). Note that one possible (non-optimal) feasible flow with cost $20$ is given by $x = (4,2,3,2,0,1,0)^\top$.}
	\label{fig:uebung}
\end{figure}
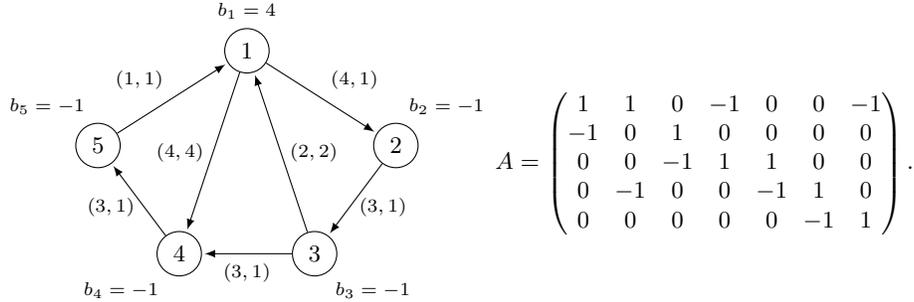

In principle, any optimization algorithm for linear programs can be used to solve \eqref{eq:MCNF}. However, the specific structure of network flow problems can be exploited to significantly improve the computational performance. 
The network simplex algorithm and its variants (see again \cite{ahuja93network}) 
rely on highly efficient updates in the residual network and have become state-of-the-art solution techniques over the years.

\subsection{Static flows as a special case of dynamic flows}\label{sec:statdyn}

In order to cast static network flow problems in the port-Hamiltonian framework \eqref{eq:phs}, we interpret a static flow $\hat{x}\in\R^{N_e}$ as a special case of a dynamic flow $x\in H^1(0,T;\R^{N_e})$, where $x(t)=\hat{x}$ for all $t\in[0,T]$ is desired to be constant over time. Motivated by \cite{schaft13port}, we 
apply the optimal control of port-Hamiltonian systems to the optimal control of (static) network flows, see also \cite{doganay2023modeling}. 
We hence consider flow not only on the edges of the network $G$, but also consider associated potentials, or pressure on the flow, in the nodes of the network. 
Let $\hat{\rho}_v\in\R$ denote a (static) potential (or pressure) in node $v\in V$, and let $\hat{\rho}\in\R^{N_v}$ denote the vector of node potentials. Similar to the flow values on the edges, these could be interpreted as a special case of dynamic node potentials $\rho\in H^1(0,T;\R^{N_v})$, where again $\rho(t)=\hat{\rho}$ for all $t\in[0,T]$ is desired in the static case. The flow conservation constraints (over time) can then be generalized to the system 
\begin{subequations}\label{eq:dynamic_flowconservation}
	\begin{align}
		&\frac{\dd}{\dd t} \rho = A \, x + u^\rho, &\rho(0) = \hat \rho, \label{eq:dynamic_reservoirs}\\
		&\frac{\dd}{\dd t} x = -A^\top \rho , &x(0) = \hat x, \label{eq:dynamic_edges}
	\end{align} 
\end{subequations}
where $u^\rho\in H^1(0,T;\R^{N_v})$ denotes external inputs (i.e., supplies and demands) at the nodes. Note that \eqref{eq:dynamic_reservoirs} ensures that the rate of change of the potential in any node $v$ equals the sum of in- and outflows on the incident edges, possibly plus some (external) input $u^\rho_v$. Similarly, \eqref{eq:dynamic_edges} guarantees that the rate at which the flow changes in any edge $e=(i,j)$ is equal to the difference $\rho_j-\rho_i$ of the potentials (or pressure) at the respective end nodes. 
Note that the node potentials $\rho$ from \eqref{eq:dynamic_flowconservation} are somewhat related to the dual variables of \eqref{eq:MCNF} that are also called node potentials in the network flow context (see, e.g., \cite{ahuja93network}). Similarly, potential-based flow networks (see, e.g., \cite{birk:nonl:1956,dewo:theg:2000,rock:netw:1984}) use node potentials to constrain the flow on the edges through physically motivated constraints. 

In the static case, i.e., when $\rho=\hat{\rho}$ and $x=\hat{x}$ are constant over time, we recover the flow conservation constraints $A\, x=b$ of \eqref{eq:MCNF} from \eqref{eq:dynamic_reservoirs}, where we set $u^\rho=-b$. In addition, \eqref{eq:dynamic_edges} then yields for $u^x=0$ that $-A^\top \rho=0$, which is satisfied for $\rho=\hat{\rho}=0$, i.e., with zero potentials in the nodes.

Now set $n=N_v+N_e$ and let $z=(\rho,x)^\top\in H^1(0,T;\R^{n})$. For completeness, we define $u^x\in H^1(0,T;\R^{N_e})$ as possible external inputs (i.e., inflows or leaks) at the edges and set $u=(u^\rho,u^x)^\top\in H^1(0,T;\R^n)$. Then the system \eqref{eq:dynamic_flowconservation} 
can be rewritten as the port-Hamiltonian system
\begin{equation}\label{eq:network_phs}
	\frac{\dd}{\dd t} z =  J(z) \, Q \, z + B(z) \, u,  \qquad z(0) = \hat z, 
\end{equation} 
where 
\begin{equation*}	    
	J(z) = J = \begin{pmatrix} 0 & A \\ -A^\top & 0  \end{pmatrix}, \quad Q=I\in\R^{n\times n}, \quad \text{and} \quad B(z)=B = \begin{pmatrix}I & 0\\ 0 & 0 \end{pmatrix}\in\R^{n\times n}.
\end{equation*} 
This is a special case of the port-Hamiltonian system \eqref{eq:phs} with $R(z)=0\in\R^{n\times n}$. 
When a solution $z$ of the system \eqref{eq:network_phs} is known, then the output of the system can be easily computed as $y = B^\top z = (\rho,0)^\top\in\R^n$.

In the following, we first utilize a linear program based on the KKT conditions of \eqref{eq:dynamic_flowconservation} as the projection applied in Algorithm~\ref{alg:gradientdescent}, which is an equivalent way to formulate the static problem \eqref{eq:MCNF}. Next, we apply a projection on the space of circulations, i.e., \(\{x\in\R^{N_e}\colon A\,x = 0\}\), paired with a barrier term to achieve results that satisfy the (\(\varepsilon\)-relaxed) capacity constraints, i.e.,\(\,  \lb_{e}-\varepsilon\leq x_e \leq \ub_{e}+\varepsilon, \ \varepsilon>0\), to also solve the static problem \eqref{eq:MCNF}. Finally, we move toward dynamic time-dependent problems and investigate a small instance with two different time-dependent cost functions.

\subsection{Static flows using projections and KKT conditions}\label{sec:staticKKT}
We now consider the static case and set $u^\rho=-b$ as discussed above. Note that $u^\rho$ is thus fixed and no longer a control variable. We hence optimize only over the initial condition $\hat{z}=(0,\hat{x})$, where the cost functional of the control problem is given by
$$\calJ(z,w) =  \frac{1}{T} \int_0^T \cost^\top x(t) \dd t = \cost^\top \hat{x} .$$
In the following, we consider $\hat{\calJ}$ as defined in Section~\ref{sec:control}.

As stated above, in our first approach we utilize the KKT conditions of our formulation for the projection of the gradient $\nabla \hat \calJ(w)$. Recall that an optimal control $\bar w$ satisfies
\begin{equation} 
	\left\langle \nabla \hat \calJ(w) , w - \bar w \right\rangle \ge 0 \quad \text{ for all } w \in \Wad, 
	\label{eq:KKTvarineq}   
\end{equation}
where, for simplicity, we consider only the controllable variables $\hat{x}$ and hence set $\Wad=\{\hat{x}\in\R^{N_e} \colon A\hat{x}=b,\,\lb\leq \hat{x}\leq\ub\}$. 
In  an iteration of the gradient descent Algorithm~\ref{alg:gradientdescent}, we are interested in a direction $h=w- \bar{w}$ such that we achieve equality in \eqref{eq:KKTvarineq}. Towards this end, we investigate the linear program
\begin{align}
	\begin{split}
		\min\limits_{h} \quad& \nabla \hat \calJ(w)^\top h \\
		\text{s.t.} \quad &  A\, h =0, \\ 
		&\lb -\hat{x}\leq h \le \ub -\hat{x}.  
		\label{eq:KKTLP}
	\end{split}
\end{align}
Note that the constraints of \eqref{eq:KKTLP} ensure that $h$ is a circulation in the static network flow problem \eqref{eq:MCNF}, i.e., that
in Algorithm~\ref{alg:gradientdescent} the control updated via the projected gradient $P_{\mathcal W_\text{ad}}(\nabla \hat\calJ(w))$ 
is still feasible. Moreover, for an optimal direction $\bar h$ of the linear program \eqref{eq:KKTLP}, i.e., $P_{\mathcal W_\text{ad}}(\nabla \hat\calJ(w)) = \bar h$, it holds that $\bar w = w -\bar h$. Thus, solving \eqref{eq:KKTLP}, i.e., computing the path from $w$ to $\bar w$, is equivalent to solving \eqref{eq:MCNF}, i.e., computing $\bar w$ directly.

We test the presented approach on the same test cases as in \cite{Loebel1996} which are generated via NETGEN, see \cite{Klingman1974NETGENAP}. We acquired the files containing the test instances on the 23rd of November 2022 from the electronic library of \emph{Zuse Institute Berlin}\footnote{http://elib.zib.de/pub/mp-testdata/mincost/netg/index.html} 
which are unfortunately not available anymore but are reproducible via \cite{Loebel1996,Klingman1974NETGENAP}. Furthermore, since the test cases in \cite{Loebel1996} are rather large, we also include three small exemplary networks (ep1-3) on which we test our approach, see, Figure~\ref{fig:ep}. The numerical experiments are implemented in \emph{Python} (version 3.8.15) using \emph{Anaconda} (version 4.12.0). For the numerical integration of the  ordinary differential equations we apply the symplectic Euler scheme, see, e.g., \cite{Hairer2006}. Furthermore, all linear programs are solved with \emph{IBM ILOG CPLEX} (version 20.1.0.0), since we experienced some issues with the open-source linear program solvers of the Python packages ``\emph{networkx}'', which returns correct optimal objective function values but, surprisingly, infeasible solutions, and ``\emph{scipy.optimize}'' where some optimal objective values were incorrect. To compare our approach, we compute relative errors of the optimized solutions of the static minimal cost network flow problem \eqref{eq:MCNF} to the ones resulting from \emph{CPLEX}. We emphasize that for all test cases of \cite{Loebel1996} and ep1-3 only one gradient step is needed to achieve a relative error of $0.0$ with Algorithm~\ref{alg:gradientdescent}. This is due to the fact that \eqref{eq:MCNF} is actually an equivalent reformulation of the static minimal cost network flow problem.

\begin{figure}[htb!]
	\begin{center}
		\subfloat[Exemplary problem 1 (ep1)
		with 8 nodes and 15 edges.
		\label{fig:ep1}]{
			\begin{tikzpicture}[scale=0.8,-latex,>=stealth,shorten >=1pt,auto,node distance=2cm, main node/.style={circle,draw,font=\small\footnotesize,minimum size=18pt,inner sep=2pt},
				every label/.style={font=\footnotesize}]
				\node[main node, label= left:{\footnotesize\(b_1=20\)}] (1) at (0,3) {1};
				\node[main node, label=above:{\footnotesize\(b_2=0\)}] (2) at (2,4.5) {2};
				\node[main node, label=below:{\footnotesize\(b_3=20\)}] (3) at (2,1.5) {3};
				\node[main node, label=above:{\footnotesize\(b_4=0\)}] (4) at (4.4,4.5) {4};
				\node[main node, label=below:{\footnotesize\(b_5=0\)}] (5) at (4.4,1.5) {5};
				\node[main node, label=above:{\footnotesize\(b_6=-10\)}] (6) at (6.8,4.5) {6};
				\node[main node, label=below:{\footnotesize\(b_7=-10\)}] (7) at (6.8,1.5) {7};
				\node[main node, label=right:{\footnotesize\(b_8=-20\)}] (8) at (8.8,3) {8};
				
				\path[thick,every node/.style={font=\footnotesize}]
				(1) edge node [above,sloped] {\(c_{(1,2)}=2\)} (2)
				(1) edge node [below,sloped] {\(5\)} (3)
				(2) edge node [above,sloped] {\(1\)} (4)
				(2) edge node [pos=0.25,above,sloped] {\(2\)} (5)
				(3) edge node [above,sloped] {\(2\)} (2)
				(3) edge node [pos=0.25,above,sloped] {\(2\)} (4)
				(3) edge node [below] {\(2\)} (5)
				(4) edge node [above,sloped] {\(2\)} (6)
				(4) edge node [pos=0.25,below,sloped] {\(3\)} (7)
				(4) edge node [pos=0.25,below,sloped] {\(4\)} (8)
				(5) edge node [pos=0.25,above,sloped] {\(3\)} (6)
				(5) edge node [below,sloped] {\(1\)} (7)
				(5) edge node [pos=0.25,above,sloped] {\(4\)} (8)
				(6) edge node [above,sloped] {\(3\)} (8)
				(7) edge node [below,sloped] {\(3\)} (8);
			\end{tikzpicture}   
		}
		\vspace{1cm}
		\subfloat[Exemplary problem 2 (ep2)
		with 6 nodes and 9 edges.
		\label{fig:ep2}
		]{
			\begin{tikzpicture}[scale=0.8,-latex,>=stealth,shorten >=1pt,auto,node distance=2cm, main node/.style={circle,draw,font=\small\footnotesize,minimum size=18pt,inner sep=2pt},
				every label/.style={font=\footnotesize}]
				\node[main node, label= left:{\footnotesize\(b_1=30\)}] (1) at (0,3) {1};
				\node[main node, label=above:{\footnotesize\(b_2=0\)}] (2) at (2,4.5) {2};
				\node[main node, label=below:{\footnotesize\(b_3=0\)}] (3) at (2,1.5) {3};
				\node[main node, label=above:{\footnotesize\(b_4=-10\)}] (4) at (4.4,4.5) {4};
				\node[main node, label=below:{\footnotesize\(b_5=-10\)}] (5) at (4.4,1.5) {5};
				\node[main node, label=right:{\footnotesize\(b_6=-10\)}] (6) at (6.4,3) {6};
				
				\path[thick,every node/.style={font=\footnotesize}]
				(1) edge node [above,sloped] {\(c_{(1,2)}=4\)} (2)
				(1) edge node [below,sloped] {\(3\)} (3)
				(2) edge node [above,sloped] {\(2\)} (3)
				(2) edge node [above,sloped] {\(2\)} (4)
				(3) edge node [above,sloped] {\(2\)} (4)
				(3) edge node [below] {\(3\)} (5)
				(3) edge node [above,sloped] {\(5\)} (6)
				(4) edge node [above,sloped] {\(5\)} (6)
				(5) edge node [below,sloped] {\(2\)} (6);
			\end{tikzpicture} 
			
		}
		\vspace{1cm}
		\subfloat[Exemplary problem 3 (ep3)
		with 10 nodes and 20 edges.
		\label{fig:ep3}
		]{
			\begin{tikzpicture}[scale=0.8,-latex,>=stealth,shorten >=1pt,auto,node distance=2cm, main node/.style={circle,draw,font=\small\footnotesize,minimum size=18pt,inner sep=2pt},
				every label/.style={font=\footnotesize}]
				\node[main node, label= left:{\footnotesize\(b_1=20\)}] (1) at (0,3) {1};
				\node[main node, label=above:{\footnotesize\(b_2=10\)}] (2) at (2,4.5) {2};
				\node[main node, label=below:{\footnotesize\(b_3=10\)}] (3) at (2,1.5) {3};
				\node[main node, label=above:{\footnotesize\(b_4=0\)}] (4) at (4.4,4.5) {4};
				\node[main node, label=below:{\footnotesize\(b_5=0\)}] (5) at (4.4,1.5) {5};
				\node[main node, label=above:{\footnotesize\(b_6=-5\)}] (6) at (6.8,4.5) {6};
				\node[main node, label=below:{\footnotesize\(b_7=0\)}] (7) at (6.8,1.5) {7};
				\node[main node, label=above:{\footnotesize\(b_8=-5\)}] (8) at (9.2,4.5) {8};
				\node[main node, label=below:{\footnotesize\(b_9=0\)}] (9) at (9.2,1.5) {9};
				\node[main node, label=right:{\footnotesize\(b_{10}=-30\)}] (10) at (11.2,3) {10};

				\path[thick,every node/.style={font=\footnotesize}]
				(1) edge node [above,sloped] {\(c_{(1,2)}=4\)} (2)
				(1) edge node [below,sloped] {\(6\)} (3)
				(2) edge node [pos=0.25,below,sloped] {\(2\)} (5)
				(2) edge[bend left=38] node [above,sloped] {\(3\)} (6)
				(2) edge node [pos=0.15,above,sloped] {\(5\)} (7)
				(3) edge node [pos=0.25,above,sloped] {\(4\)} (4)
				(3) edge node [below] {\(2\)} (5)
				(3) edge node [pos=0.15,below,sloped] {\(6\)} (6)
				(3) edge[bend right=35] node [below,sloped] {\(3\)} (9)
				(4) edge node [above,sloped] {\(1\)} (6)
				(4) edge node [pos=0.15,above,sloped] {\(4\)} (9)
				(5) edge node [pos=0.15,below,sloped] {\(3\)} (4)
				(5) edge node [below,sloped] {\(3\)} (7)
				(5) edge node [pos=0.1,below,sloped] {\(5\)} (8)
				(6) edge node [above,sloped] {\(3\)} (8)
				(6) edge node [pos=0.7,above,sloped] {\(4\)} (10)
				(7) edge node [pos=0.25,above,sloped] {\(3\)} (6)
				(7) edge node [pos=0.15,below,sloped] {\(3\)} (8)  
				(9) edge node [pos=0.25,above,sloped] {\(3\)} (6)
				(9) edge node [below,sloped] {\(5\)} (10);
			\end{tikzpicture} 
			
		}
	\end{center}		
	\caption{Exemplary minimum cost network flow problems (ep1-3).
		In all cases we assume $\lb_e = 0$ and $\ub_e = 20$ for each edge,
		and the values next to the edges denote the cost of the edges.}%
	\label{fig:ep}
\end{figure}
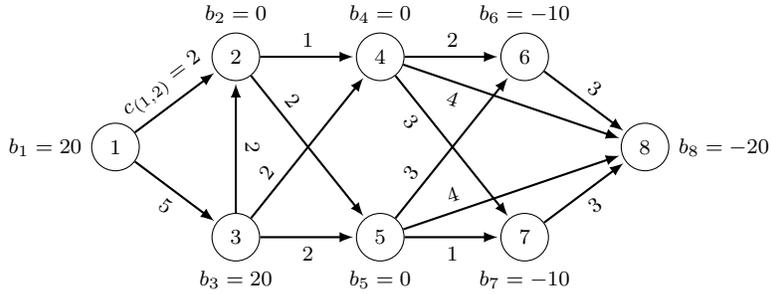
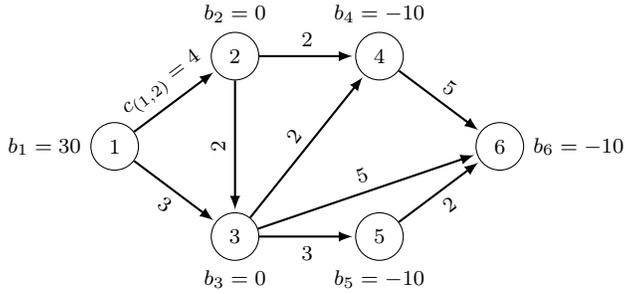
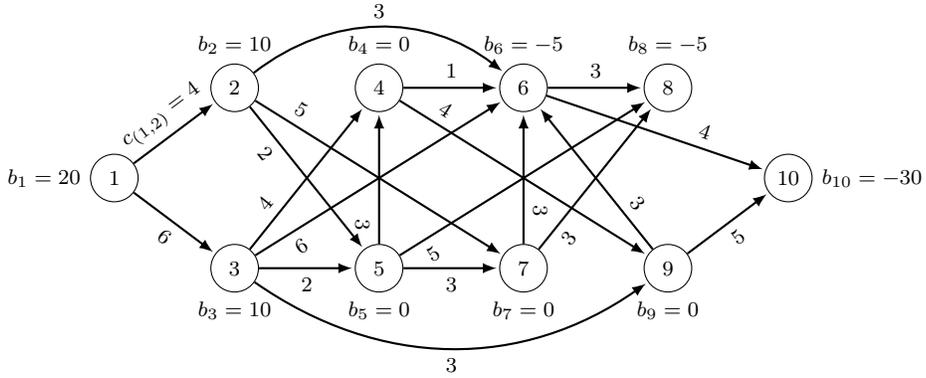

\subsection{Static flows using orthogonal projections and barrier terms}\label{sec:barrier}
We use the same problem formulation for static network flows as in Section~\ref{sec:staticKKT}. 
Instead of including the capacity constraints into the projection, one can use penalty or barrier methods to obtain feasible flows. In this case, a projection is only required to map the flows to the affine-linear space of flows satisfying the flow conservation constraints, i.e., \(\{x\in\R^{N_e}\colon A\,x = b\}\). Penalty and barrier methods both relax (some or all) constraints and penalize constraint violations in the objective function. In the penalty method a penalization term is added to the objective function that is zero if the constraints are satisfied and goes to infinity for increasing constraint violation. On the other hand, in the barrier method a barrier term is added to the objective function that tends to infinity when approaching the boundary of feasible set from the inside.
A comprehensive discussion of penalty and barrier methods is, for example, given in \cite{nocedal06numerical}.
To satisfy the flow conservation constraints after a gradient step, we project the gradient $\nabla \hat \calJ(w)$ to the space of circulations, i.e., \(\{x\in\R^{N_e}\colon A\,x = 0\}\). Towards this end, we utilize the following orthogonal projection, see, e.g., \cite{Petersen_2012},
\begin{equation}
	P_{\mathcal W_\text{ad}}(\nabla \hat\calJ(w)) = \left(I-A^\top(A\,A^\top)^{-1}A\right)\nabla \hat\calJ(w),
	\label{eq:LAproj}
\end{equation}
where $I$ is the identity matrix of the same dimension as $A^\top\,A$. Furthermore, in order to achieve results that do not violate the ($\varepsilon$-relaxed) capacity constraints we use logarithmic functions as a barrier term in our numerical tests, i.e.,
\[
\Theta(x) :=\Theta(x, \alpha,  \varepsilon) := - \alpha \left(\sum_{e\in E} \left(\ln\left(\left(\ub_e-x_e + \varepsilon\right)\right) + \ln\left(\left(x_e
-\lb_e+ \varepsilon\right)\right)\right)\right) ,
\]
where $\alpha>0$ is a penalization parameter and $\epsilon>0$ is a small constant. If the flow value $x_e$ on an edge $e\in E$ reaches one of the flow bounds $\ub_e$ or $\lb_e$ then $\Theta(x)$ becomes large.
Since the feasible set of network flow problems has a specific polyhedral structure, optimal flow solutions often have flow values that are on the boundary of the feasible set, i.e., that satisfy flow bound constraints with equality on many edges. This is also the case for standard choices for the starting solution. 
Therefore, we relax the flow bound constraints to $\lb-\varepsilon \leq x \leq \ub+\varepsilon$ in the barrier term $\Theta(x)$ since otherwise already the starting solution would lead to unbounded cost values.
The cost functional including the barrier term now reads
\[
\calJ_\Theta(z,w):=
\calJ(z,w)+\int_0^T\Theta(x(t))\dd t.
\]
For the implementation of Algorithm~\ref{alg:gradientdescent} we normalize the cost vector $c=\left(c_e\right)_{e\in E}$ with the maximum cost of the edges, i.e., $\bar c=\left(c_e/\max c\right)_{e\in E}$, to ensure that for each test instance the value of $\alpha$ can be chosen from a comparable range. During the optimization process we decrease the values of $\alpha$ and $\varepsilon$ after each gradient step in Algorithm~\ref{alg:gradientdescent}, thus decreasing the influence of the barrier term and the relaxation. We remark that the parameter tuning for this problem is not straightforward since it is unfortunately instance dependent. Further, note that $\Theta$, due to the logarithmic terms, favours flow values in the middle of the intervals $[\lb-\varepsilon, \ub+\varepsilon]$. Thus, this approach has problems computing solutions that are on the boundaries, and hence it is not likely to find optimal flow solutions which are usually extreme points of the polyhedral feasible set. Nevertheless, the combination of the projection \eqref{eq:LAproj} and the barrier term $\Theta$ allows us to extend this approach for dynamic time-dependent network flow problems, as investigated in the following section.

Due to the required instance dependent parameter tuning we narrow our numerical experiment to the exemplary problems (ep1-3). As initial flows for Algorithm~\ref{alg:gradientdescent} we select the optimal solutions of the maximization problem with the same objective function and constraints as \eqref{eq:MCNF}, i.e., the worst case solutions. The results after $300$ iterations and a maximum of $20$ Armijo iterations in Algorithm~\ref{alg:gradientdescent} with a geometric
decrease of $\alpha$ and $\varepsilon$ after each gradient step $k$, where we applied the update scheme $\alpha_{k+1} = 0.9\,\alpha_{k}$
until $\alpha_k=0.01$ is reached and $\varepsilon_{k+1} = 0.99\,\varepsilon_{k}$,
are shown in Table~\ref{tbl:aeLin}. Here, we chose $\sum_{e\in E} |P_{\mathcal W_\text{ad}}(\nabla \hat\calJ(w))_e|<10^{-6}$ as a stopping condition which was not satisfied during the $300$ iterations, i.e., the approaches did not converge and stopped after the maximum number of iterations. However, we observe that for all problems ep1-3 there are no significant changes to the costs of the solutions after approximately \(150\) iterations, see Figure~\ref{fig:Solep}. Furthermore, for all three problems we achieved a relative error of below $5\%$ where graphs with a larger number of edges led to
larger relative errors w.r.t.\ the solutions of \eqref{eq:MCNF}. The reason is that the barrier term $\Theta$ pushes the flow value away from the boundaries towards the middle of the interval $[\lb_{e}-\varepsilon, \ub_{e}+\varepsilon]$ for each edge, and hence for a larger number of edges the deviation from the optimal solution, which is an extreme point, may grow, accumulating in a potentially larger relative error.

\begin{table}[htb]
	\centering
	\begin{tabular}{|l|lll|}
		\hline
		Name    & $\alpha_0$ & $\varepsilon_0$  & \parbox{6em}{Rel.\ Error \\ w.r.t.\ \eqref{eq:MCNF}} \\ \hline
		ep1     & 0.7          & 1.3          & 0.0392                                             \\ \hline
		ep2     & 1.0          & 2.0          & 0.0277                                             \\ \hline
		ep3     & 0.7          & 1.1          & 0.0430                                              \\ \hline          
	\end{tabular}
	\caption{Results after $300$ iterations with $20$ Armijo steps for test instances (ep1-3) with geometrically
		decreasing $\alpha$ and $\varepsilon$.\label{tbl:aeLin}}
\end{table}

\begin{figure}[htb!]
	\centering
	\subfloat[Exemplary problem 1 (ep1) -- cost w.r.t.\ gradient steps\label{fig:Solep1}]{\includegraphics[width=0.45\textwidth]{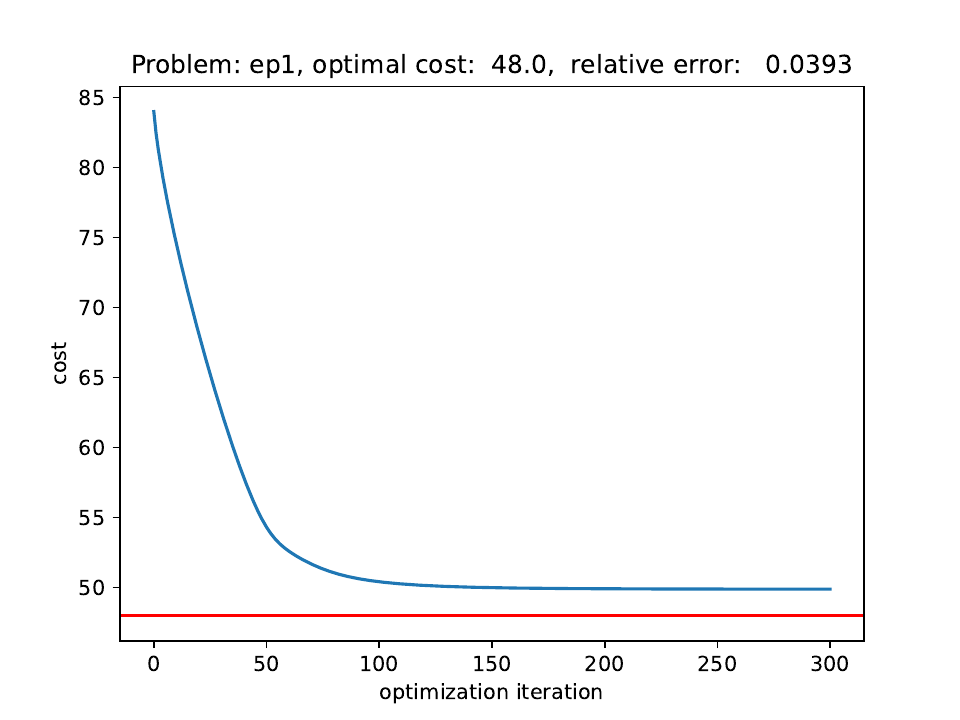} }
	\hspace{\fill}
	\subfloat[Exemplary problem 2 (ep2) -- cost w.r.t.\ gradient steps\label{fig:Solep2}]{\includegraphics[width=0.45\textwidth]{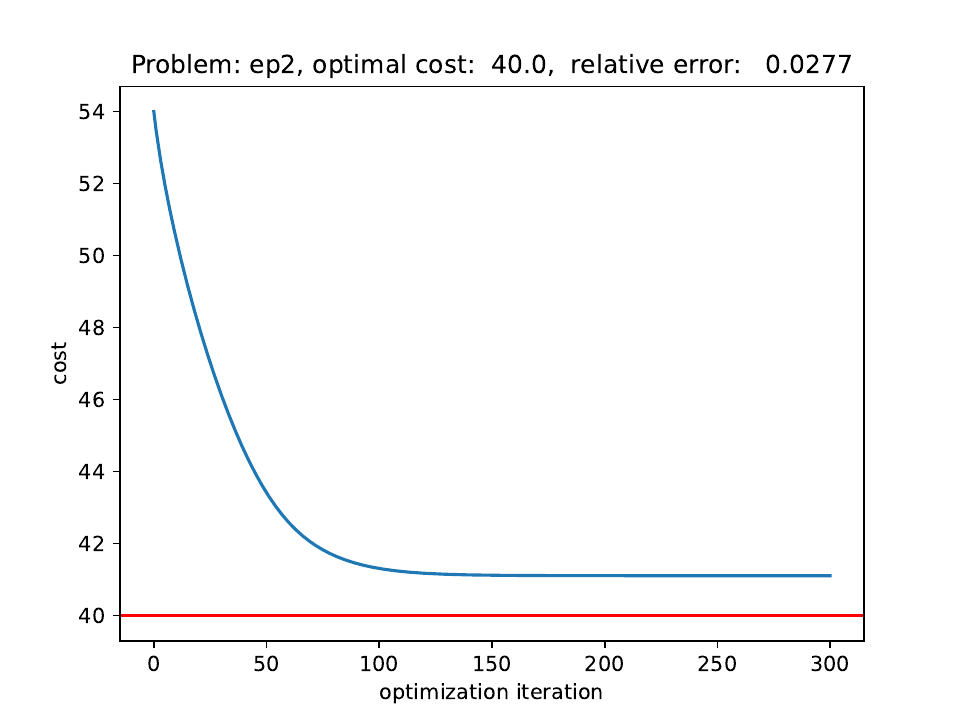} }
	\vspace{\fill}
	\subfloat[Exemplary problem 3 (ep3) -- cost w.r.t.\ gradient steps\label{fig:Solep3}]{\includegraphics[width=0.45\textwidth]{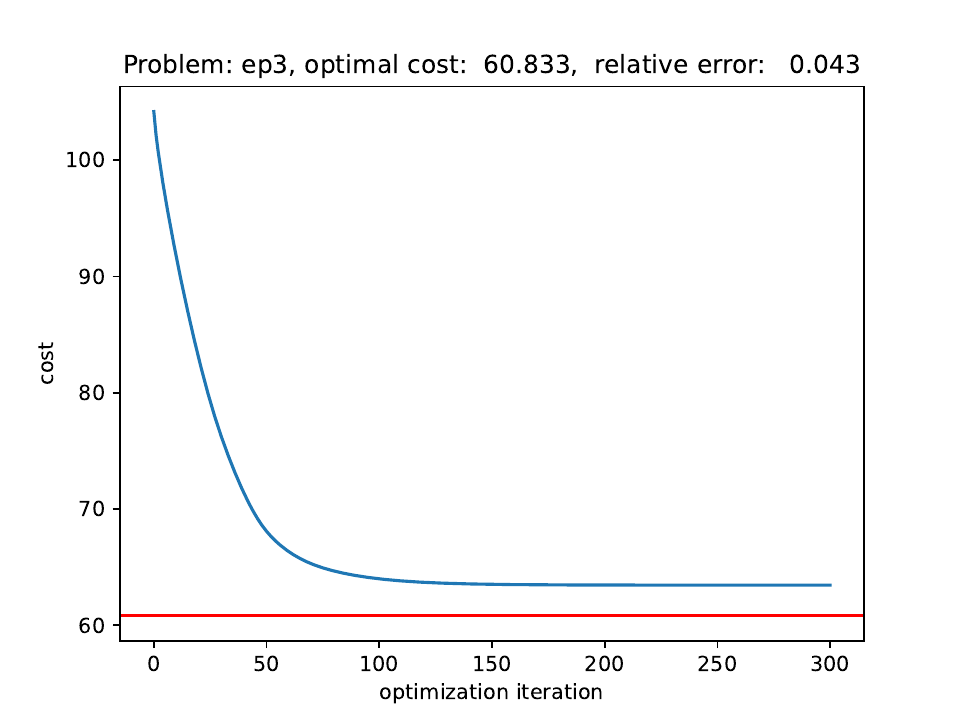} }
	\caption{Cost for $300$ gradient steps with $20$ Armijo iterations for exemplary problems (ep1--3). The red lines show the optimal costs of  \eqref{eq:MCNF} for these problems.}
	\label{fig:Solep}
\end{figure}

\section{Time-dependent network flow problems}\label{sec:timeNWF}
In this section, we extend our approach to dynamic time-dependent problems and show a proof-of-concept with two different time-dependent cost functions. We consider time-dependent network flow problems with relaxed flow bound constraints
\begin{alignat}{3} \label{eq:cost_time}
	\min\quad & \mathcal{J}(z,w), \quad \mathcal{J}(z,w)= \int_0^T \sum_{e \in E} c_e(t)\, x_e(t)   + \frac{\lambda}{2} && \left| \frac{\dd}{\dd t}\ux(t) \right|^2 \dd t \\
	\text{s.t.}\quad
	&\frac{\dd}{\dd t} \rho = A \, x , \qquad \quad &&\rho(0) = \hat \rho, \nonumber \\
	&\frac{\dd}{\dd t} x = -A^\top \rho+  \ux, &&x(0) = \hat x, \nonumber \\
	&\lb-\varepsilon \leq x \le \ub+\varepsilon, \nonumber
\end{alignat}
where, using the notation from \eqref{eq:opt_problem}, $w=(u,\hat{z})$,  $\varepsilon>0$, and where we assume that $u^\rho=0$ and $\hat{z}$ are given and fixed (and not part of the optimization). We hence omit the fixed components $u^\rho$ and $\hat{z}$ of $w$ and identify $w$ with $\ux$ for better readability. Moreover, we consider only the case $\lambda>0$ in the following. Hence, the penalization term in the cost function is active and minimizers have higher regularity. In particular, we allow only to vary the flow on circulations in the network which leads us to the admissible set given by
\[
\Wad = \Bigl\{ \ux \in H^1\left((0,T);\R^{N_e}\right) \colon \quad \ux(0)=0, \quad A\, \ux(t) = 0 \quad\text{a.e. in}\; (0,T) \Bigr\}.
\] 
In addition to ensuring that the flow conservation constraints are satisfied a.e.\ in $(0,T)$, this admissible set guarantees that the control does not change the initial condition $\hat x$. Note that the cost functional penalizes $\|\frac{\dd}{\dd t}\ux\|^2$ which corresponds to the assumption that adjusting the redirection of flow is expensive. 

\begin{remark}
	We note that in $W=\{ \ux \in H^1\left((0,T);\R^{N_e}\right) \colon \ux(0)=0 \}$ the norms $\| \ux\|_\lambda$ defined by 
	\[
	\| \ux\|_\lambda^2 \coloneqq \| \ux\|_{L^2}^2 + \lambda \,\Bigl\| \frac{\dd}{\dd t} \ux \Bigr\|_{L^2}^2
	\]
	and $\| \ux \|_{H^1}$ are equivalent, since for $c_1 = \min(\lambda,1)$ and $c_2 = \max(\lambda,1)$ it holds $c_1 \| \ux \|_{H^1}^2 \le \| \ux \|_{\lambda}^2 \le c_2 \| \ux \|_{H^1}^2$ for each $\ux\in W$. Moreover, since $u^x(0)=0$ is fixed, already the semi-norm $\| \nabla \ux \|_{L^2}^2$ is equivalent to $\| \ux\|_\lambda^2$. Hence,  for $\lambda >0$ the cost function $\calJ(z,w)$ from \eqref{eq:cost_time} is coercive and the result on the existence of optimal controls can be directly applied.
\end{remark}

We use the barrier approach from Section~\ref{sec:barrier} to ensure the $\varepsilon$-relaxed capacity constraints analogous to Section~\ref{sec:barrier}. The derivative of this barrier term appears also in the adjoint system and is given by
\begin{alignat*}{3}
	&-\frac{\dd}{\dd t} \nu = -A^\top \, p + c_\rho(t), \qquad \quad&\qquad \nu(T) = 0, \\
	&-\frac{\dd}{\dd t} p = A \nu + c_x(t) + \nabla \Theta(x(t)), &\qquad p(T) = 0.
\end{alignat*} 

The computation of the gradient is a bit more involved, since we consider $\Wad \subset  H^1\left((0,T);\R^{N_e}\right)$ here. We therefore first compute the linearization of $\hat \calJ(w)$ and then solve an additional problem to identify the gradient using the Riesz representation theorem. 
This leads to 
\[
\dd_{\ux} \calJ((z,\ux))[h] = \int_0^T \lambda \frac{\dd}{\dd t} \ux \cdot \frac{\dd}{\dd t} h + p \cdot h \dd t.
\]
Afterwards, the gradient $g_{\ux}$ is identified by solving 
\[
\int_0^T g_{\ux} \cdot h + \lambda \frac{\dd}{\dd t } g_{\ux} \cdot \frac{\dd}{\dd t}  h\dd t = \int_0^T \lambda \frac{\dd}{\dd t} {\ux} \cdot \frac{\dd}{\dd t} h + p \cdot h \dd t \quad \text{for all $h$}
\]
for $g_{\ux}$. Since the states are only time-dependent, we solve the problem in strong formulation, i.e.,
\[
g_{\ux} - \lambda \frac{\dd^2}{\dd t^2} g_{\ux} = \lambda \frac{\dd^2}{\dd t^2} {\ux}  + p,
\]
with finite differences, where we incorporate the boundary conditions ${\ux}(0)=0$, $\frac{\dd}{\dd t }{\ux}(0) = 0 = \frac{\dd}{\dd t }{\ux}(T)$ in the difference operator. The gradient $g_{\ux}$ is then projected to the feasible set to satisfy the flow conservation constraints.
For the projection of $g_{\ux}(t)$ we again utilize the orthogonal projection
\begin{equation}
	P_{\mathcal W_\text{ad}}\left(g_{\ux}(t)\right)=\left(I-A^\top(A\,A^\top)^{-1}A\right)\,g_{\ux}(t)
\end{equation}
for each time $t \in [0,T]$ and apply the barrier term $\Theta(x(t), \alpha,  \varepsilon)$ to achieve (\(\varepsilon\)-relaxed) feasible solutions. For our numerical experiments we investigate a small directed acyclic time-dependent network flow problem with $4$ nodes and $4$ edges and consider two different time-dependent cost functions on the edges for the time interval $[0,1]$. This problem and the two different time-dependent cost functions, $c_{e}^q(t)$ for all edges $e\in E$, $t\in[0,1]$, and $q\in\{1,2\}$, are illustrated in Figure~\ref{fig:diamond}. 

\begin{figure}[htb!]
	\begin{minipage}{\textwidth}
		\centering
		\begin{tikzpicture}[scale=0.75,-latex,>=stealth,shorten >=1pt,auto,node distance=3cm, main node/.style={circle,draw,font=\small\footnotesize,minimum size=20pt,inner sep=2pt},
			every label/.style={font=\footnotesize}]
			\node[main node, label= left:{\footnotesize\(b_1=4\)}] (1) at (0,3) {1};
			\node[main node, label=above:{\footnotesize\(b_2=0\)}] (2) at (3,5.5) {2};
			\node[main node, label=below:{\footnotesize\(b_3=0\)}] (3) at (3,0.5) {3};
			\node[main node, label=right:{\footnotesize\(b_4=-4\)}] (4) at (6,3) {4};
			
			\path[thick,every node/.style={font=\footnotesize}]
			(1) edge node [above,sloped] {\(c_{(1,2)}^q(t)\)} (2)
			(1) edge node [below,sloped] {\(c_{(1,3)}^q(t)\)} (3)
			(2) edge node [above,sloped] {\(c_{(2,4)}^q(t)\)} (4)
			(3) edge node [below,sloped] {\(c_{(3,4)}^q(t)\)} (4);
		\end{tikzpicture} 
	\end{minipage}
	
	\vspace{2ex}
	\begin{minipage}{\textwidth}
		\small
		\centering
		\begin{tabular}{l@{\extracolsep{3em}}l}
			\toprule
			Linear cost function & Hat-type piecewise linear cost function\\
			Cost of edges for $t\in[0,1]$ & Cost of edges for $t\in[0,1]$ \\ \midrule
			$c_{(1,2)}^1(t)=c_{(2,4)}^1(t)=100(1+t)$  &   $c_{(1,2)}^2(t)=c_{(2,4)}^2(t)= \begin{cases}
				100(1+2\,t),&  t< 0.5,\\
				100(1+2(1-t)),&  t \geq 0.5 
			\end{cases}
			$     \\ \hline
			$c_{(1,3)}^1(t)=c_{(3,4)}^1(t)=100(2-t)$   &  $c_{(1,3)}^2(t)=c_{(3,4)}^2(t)=\begin{cases}
				100(2-2\,t),&  t< 0.5,\\
				100(2-2(1-t)),&  t \geq 0.5 
			\end{cases}$    \\ \bottomrule
		\end{tabular}
	\end{minipage}
	
	\caption{Time-dependent example problem -- minimum cost network flow problem with $4$ nodes and $4$ edges, where $\lb=0$ and $\ub=4$ for each edge. We consider two different time-dependent cost functions 
		$c_e^q=c_{(i,j)}^q:[0,1]\to \R$ with $q\in\{1,2\}$. For these simple examples the optimal cost ($1000$) can be computed explicitly.}
	\label{fig:diamond}
\end{figure}

\subsection{Linear cost function}
By construction, the upper path, i.e., from node $1$ to $4$ through node $2$, has smaller cost than the lower path, i.e., from node $1$ to $4$ through node $3$ for $t\in[0,\frac{1}{2})$, while for $t\in(\frac{1}{2},1]$ the lower path is preferable, and at $t=\frac{1}{2}$ both paths are indistinguishable w.r.t.\ the cost.  
For the barrier term $\Theta$ we normalize the costs as above and set the parameters as $\alpha_{0} = 1$ and $\varepsilon_{0}=0.001$ while applying the same geometric
decrease in $\alpha$ and $\varepsilon$ as in the previous section. For the dynamics \eqref{eq:cost_time} we choose $\lambda=0.001$, i.e. the penalization w.r.t.\ $\ux$ is small. For the optimization approach we allow a maximum of $50$ gradient steps and $20$ Armijo iterations with an initial step length of $1000$ while stopping the scheme if $\sum_{i=1}^4 |P_{\mathcal W_\text{ad}}(\nabla \hat\calJ(\ux))_i|<10^{-6}$ or if the maximum number of iterations is reached. For simplicity, we discretize the time interval $[0,1]$ by $1000$ equidistant time steps. This ensures that the discrete adjoint and state information is available at the same points in time.  As the initial flow we choose the flow that only uses the upper path, $x(0)=(x_{(1,2)}(0),x_{(1,3)}(0),x_{(2,4)}(0),x_{(3,4)}(0))^\top=(4,0,4,0)^\top$, i.e., the cost optimal path at time $t=0$, and expect that the optimizer redirects the flow over time to the
lower path $x=(0,4,0,4)^\top$ that is optimal at time $t=1$.
We actually observe this in our numerical tests.  After 25 gradient steps the optimizer updates the control $\ux$ such that, after starting in $x(0)=(4,0,4,0)^\top$, the flow at time $t=1$, i.e., after $1000$ time steps, has the value of $x(1)\approx(0.0213, 3.9787, 0.0213, 3.9787)^\top$ which lies in the precision of the applied Euler scheme and the barrier term $\Theta$. In Figure~\ref{fig:linDiamCost}, the total cost over time for each gradient step in Algorithm~\ref{alg:gradientdescent} is shown, where the red line at $1000$ depicts the optimal costs that is attained for a flow that switches from the upper path to the lower path at time $t=\frac{1}{2}$. 
Note that we can not expect to reach this optimum value, due to the smoothness constraints imposed on our control $\ux(t)$.
We observe a descent in total cost over time in $25$ gradient iterations until the optimizer can not find a feasible Armijo step length anymore, i.e., reaches the maximum number of Armijo iterations. We expect this behaviour due to the barrier approach.

The course of each component of the control $\ux$ after the first update and of the optimized
control $\overline{u}^x$ over time are shown in Figure~\ref{fig:linDiamw1stOT} and Figure~\ref{fig:linDiamwOptOT}, respectively. Note that we have a two-fold symmetry in these figures. The $\ux_i$ values for edges from the same path, i.e., edges $(1,2)$ and $(2,4)$ of the upper path (cyan, dotted) and edges $(1,3)$ and $(3,4)$ of the lower path (magenta), are identical and change with the same ratio, and $\ux_i$ values of different paths have the same absolute values but differing signs, i.e., edges from the upper path lose flow over time while edges from the lower path gain the same amount over time. One can observe that more and more flow is redirected via $\ux$ during the optimization process. Furthermore, this is also visible in the course of the optimal flow values over time, see Figure~\ref{fig:linDiamxOptOT}. This nicely captures the expected behaviour of the optimizer for this simple time-dependent network flow problem with a linear cost function.

\begin{figure}[htb!]
	\centering
	\subfloat[Control $\ux$ after the first update over time where the values w.r.t.\ the edges $(1,2)$ and $(2,4)$ are dotted and in cyan and the values w.r.t.\ the edges $(1,3)$ and $(3,4)$ are in magenta.\label{fig:linDiamw1stOT}]{\includegraphics[width=0.45\textwidth]{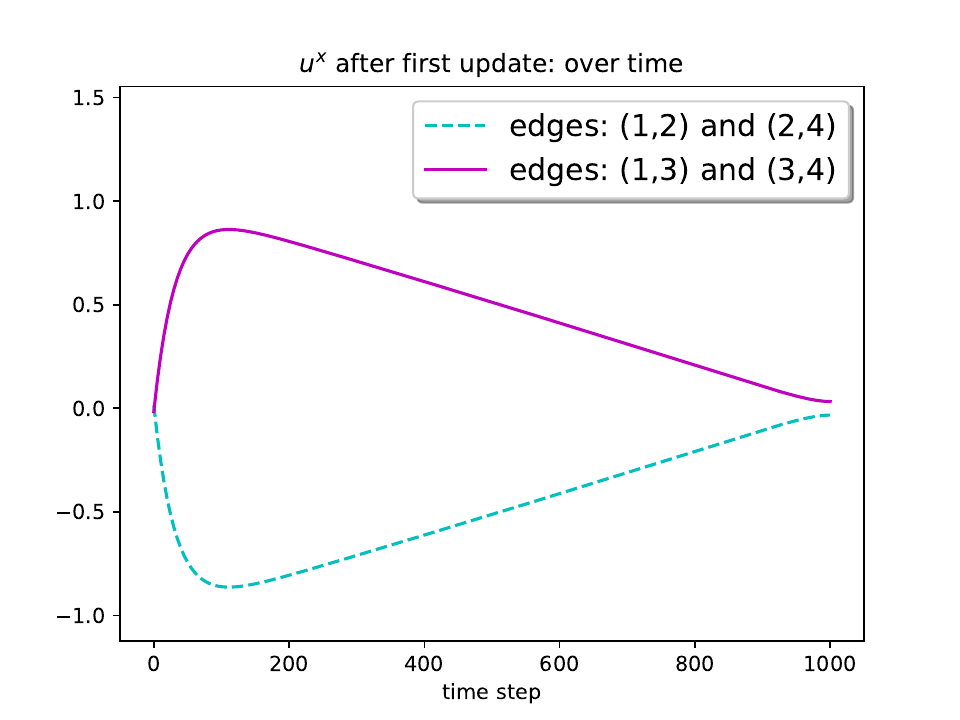}}
	\hspace{\fill}    
	\subfloat[Optimized control $\overline{u}^x$ over time. \label{fig:linDiamwOptOT}]{\includegraphics[width=0.45\textwidth]{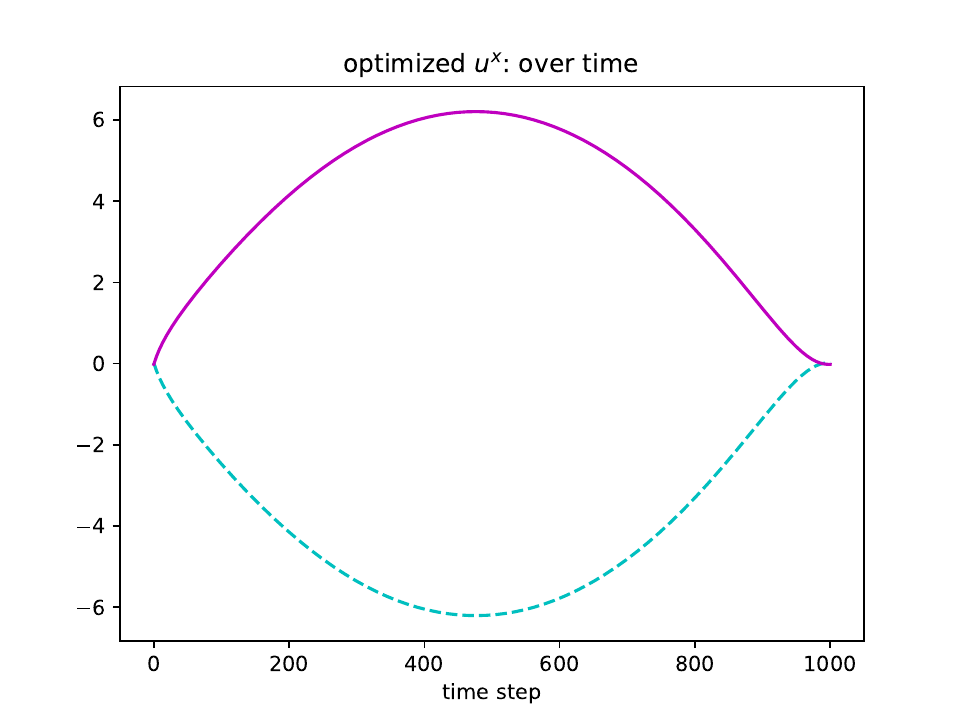}}
	
	\subfloat[Optimized flow values $\bar x$  over time. Here, the $\bar x$ components w.r.t.\ the edges $(1,2)$ and $(2,4)$ are dotted and in cyan and the values w.r.t.\ the edges $(1,3)$ and $(3,4)$ are in magenta. \label{fig:linDiamxOptOT}]{\includegraphics[width=0.45\textwidth]{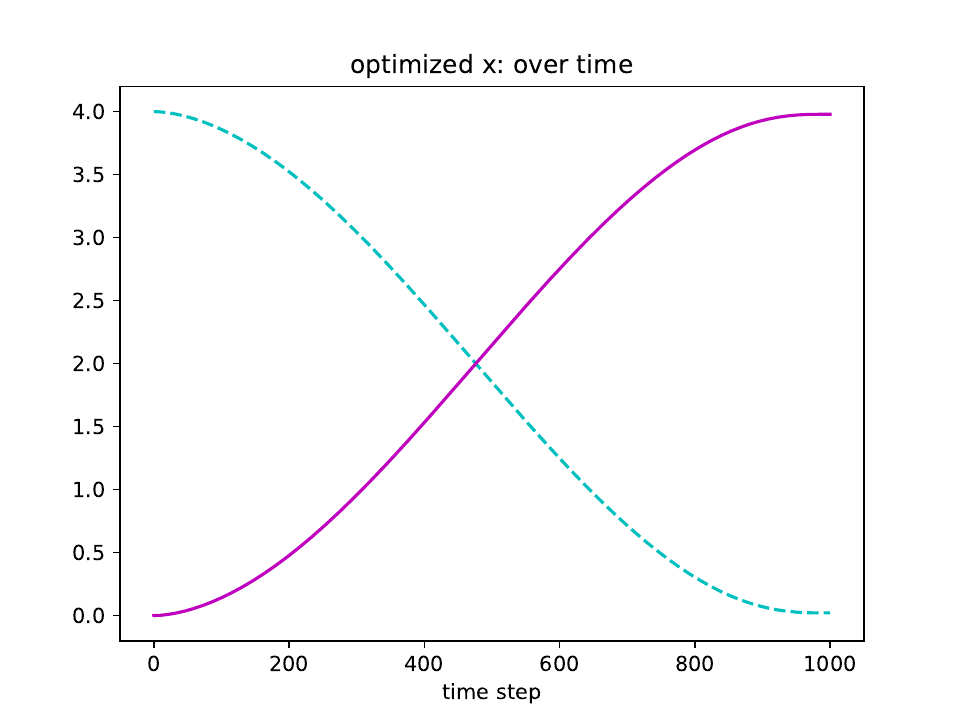}}
	\hspace{\fill}    
	\subfloat[Cost (over time) w.r.t.\ the gradient iterations. The red line depicts the optimal cost of $1000$.\label{fig:linDiamCost}]{\includegraphics[width=0.45\textwidth]{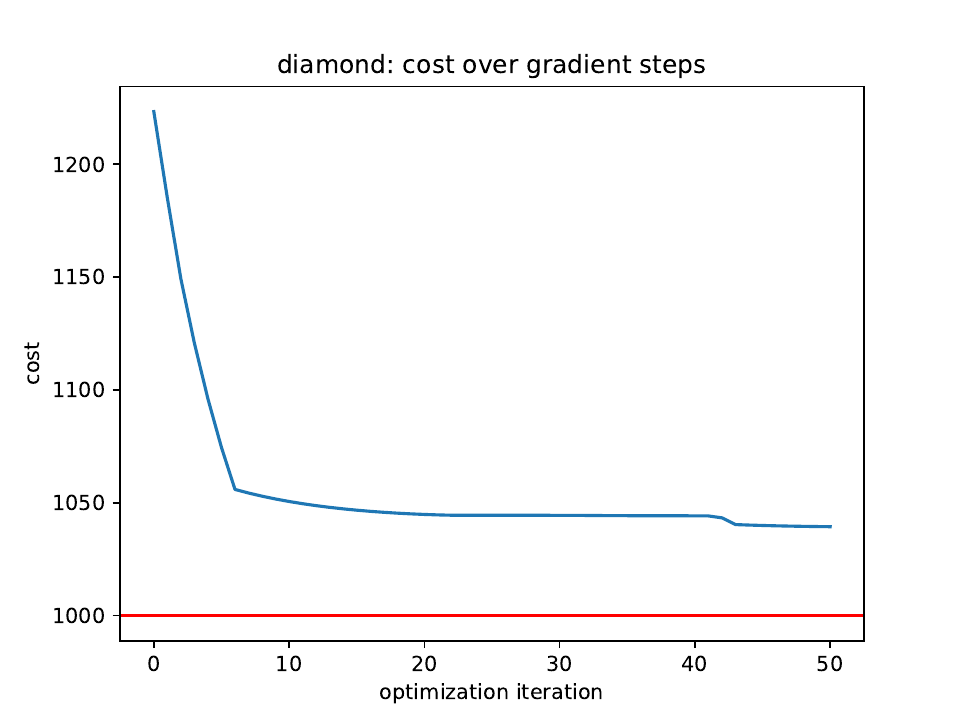}}
	\caption{Time-dependent problem with linear cost function}
	\label{fig:wOT}
\end{figure}

\subsection{Piecewise linear cost function}

\begin{figure}[htb!]
	\centering
	\subfloat[Control $\ux$ after the first update over time where the values w.r.t.\ the edges $(1,2)$ and $(2,4)$ are dotted and in cyan and the values w.r.t.\ the edges $(1,3)$ and $(3,4)$ are in magenta.\label{fig:hatDiamw1stOT}]{\includegraphics[width=0.45\textwidth]{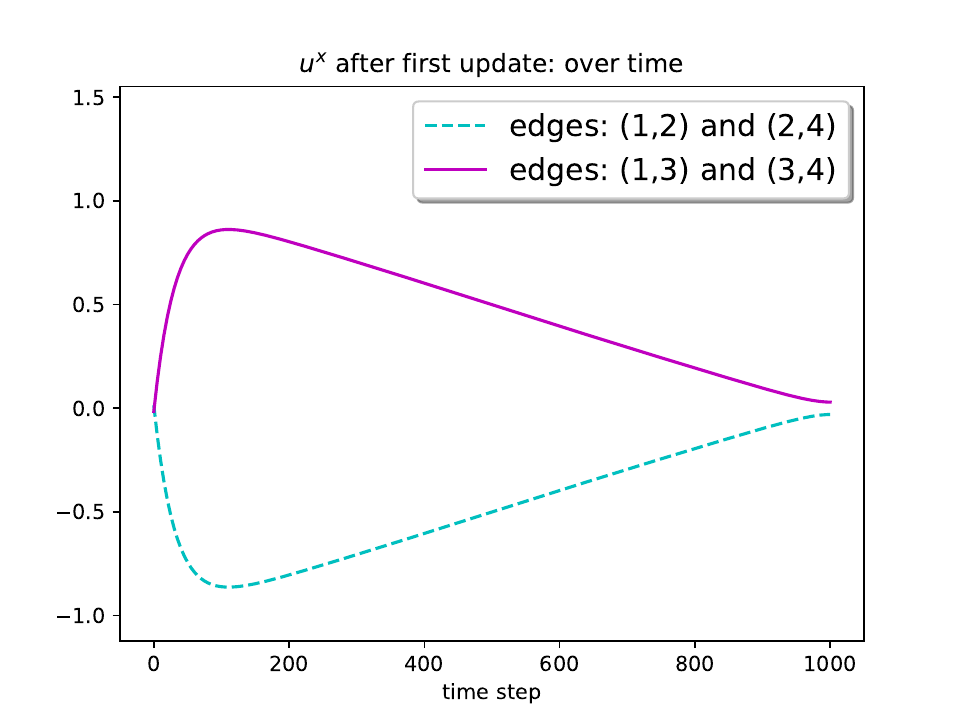}}
	\hspace{\fill}    
	\subfloat[Optimized control $\overline{u}^x$ over time. \label{fig:hatDiamwOptOT}]{\includegraphics[width=0.45\textwidth]{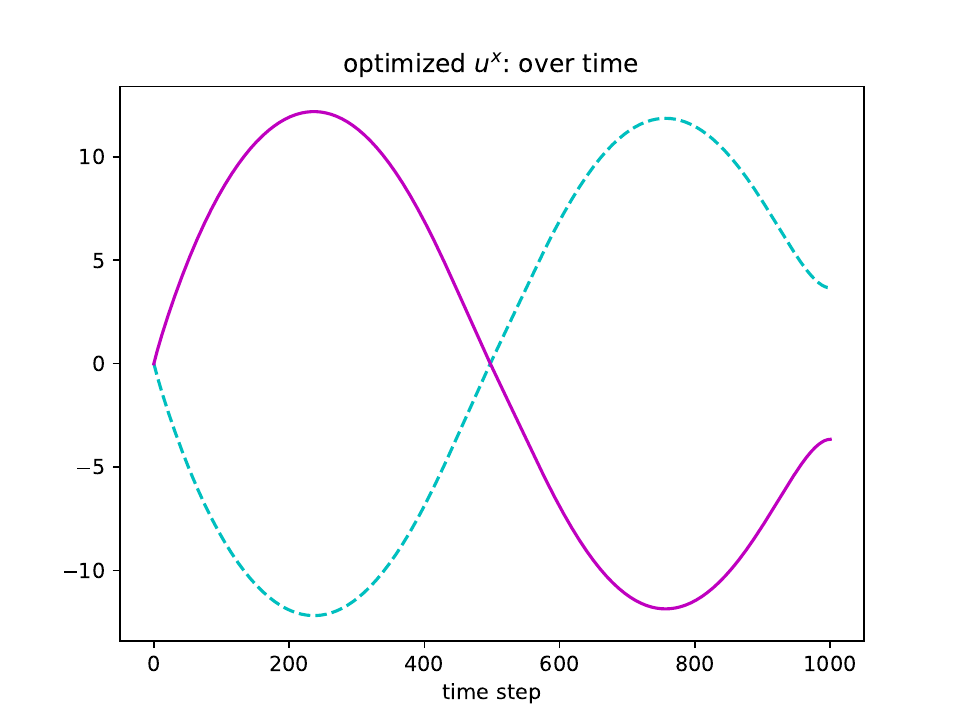}}
	
	\subfloat[Optimized flow values $\bar x$  over time. Here, the $\bar x$ components w.r.t.\ the edges $(1,2)$ and $(2,4)$ are dotted and in cyan and the values w.r.t.\ the edges $(1,3)$ and $(3,4)$ are in magenta. \label{fig:hatDiamxOptOT}]{\includegraphics[width=0.45\textwidth]{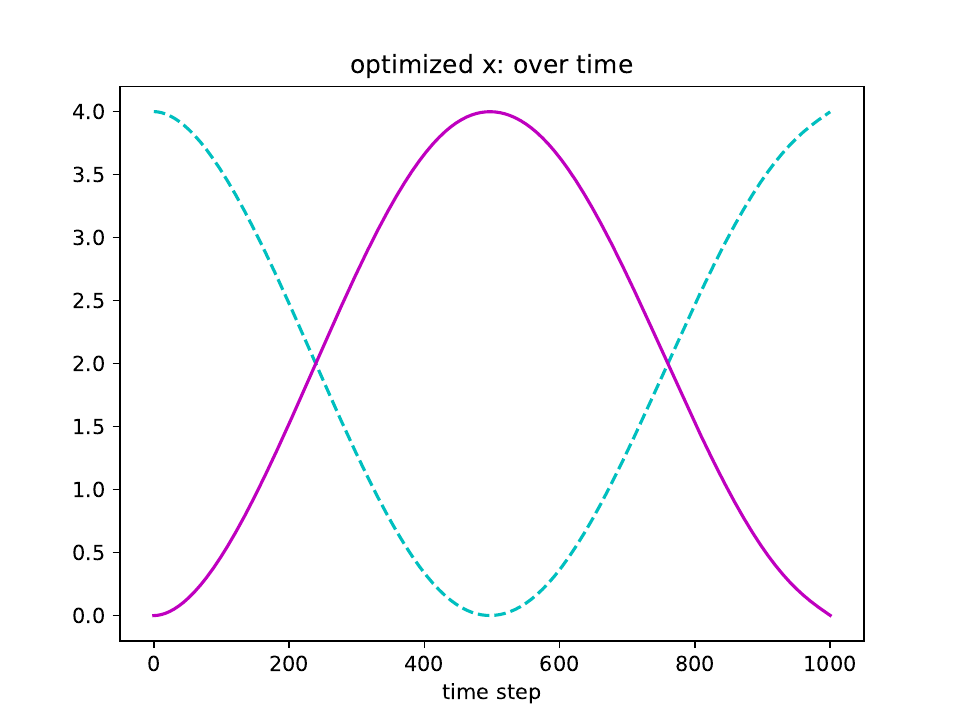}}
	\hspace{\fill}    
	\subfloat[Cost (over time) w.r.t.\ the gradient iterations. The red line depicts the non-continuous optimal cost of $1000$.\label{fig:hatDiamCost}]{\includegraphics[width=0.45\textwidth]{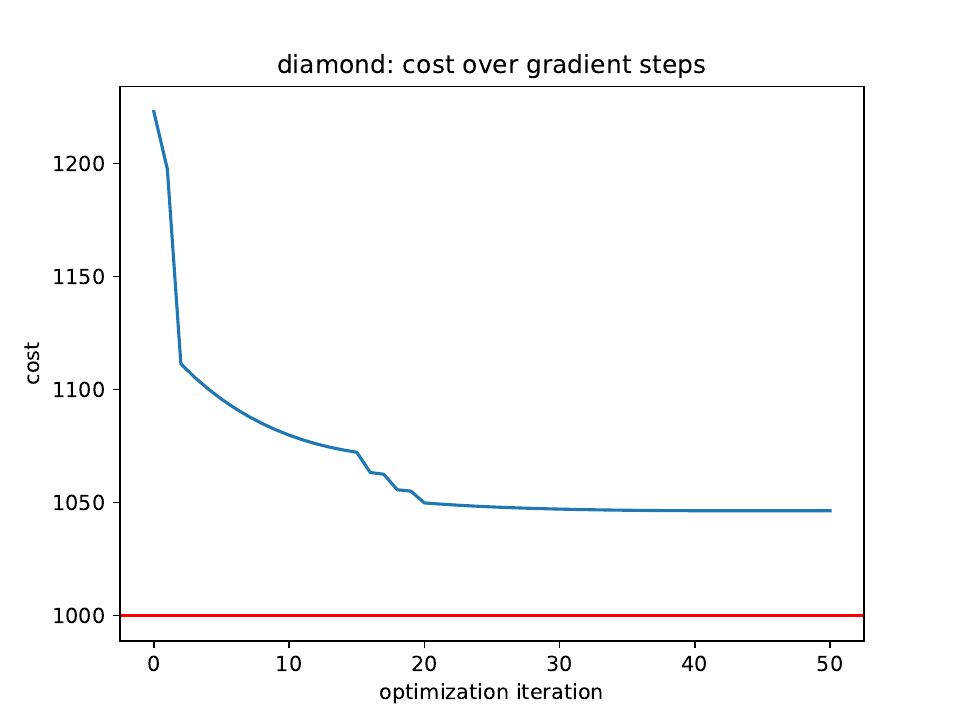}}
	\caption{Time-dependent problem with hat-type cost function}
	
\end{figure}

As a second example, we consider a piecewise linear cost function that resembles the shape of a hat, i.e., that is first linearly increasing and then linearly decreasing.  
This hat-type cost function, see Figure~\ref{fig:diamond}, is similar to the linear cost function in the sense that for $t\in[0, \frac{1}{4})$, the upper path has smaller cost than the lower path for $t\in[0,\frac{1}{4})$, at $t=\frac{1}{4}$ the paths are indistinguishable w.r.t.\ the cost, and for $t\in(\frac{1}{4},\frac{1}{2})$ the lower path is
preferable. However, for $t\in[\frac{1}{2}, 1]$ the roles reverse, i.e., the lower path is superior w.r.t.\ the cost for $t\in[\frac{1}{2}, \frac{3}{4})$, the paths are indistinguishable at $t=\frac{3}{4}$, and the upper path has again smaller cost for $t\in[\frac{3}{4}, 1]$. Hence, starting with the initial flow $x(0)=(x_{(1,2)}(0),x_{(1,3)}(0),x_{(2,4)}(0),x_{(3,4)}(0))^\top=(4,0,4,0)^\top$ that only uses the upper path one would expect that until $t=\frac{1}{2}$ the approach would shift the flow to the lower path, i.e., $x(\frac{1}{2})=(0,4,0,4)^\top$, and then redirect it back to the upper path for $t\in[\frac{1}{2}, 1]$, i.e., $x(1)=(4,0,4,0)^\top$. As for the linear cost function, the same setting for the barrier term $\Theta$ is applied, i.e., $\alpha_{0}= 1$ and $\varepsilon_{0}=0.001$ with the same geometric decrease. We also allow a maximum of $50$ gradient steps and $20$ Armijo iterations with the same initial step length of $1000$ and stopping condition $\sum_{i=1}^4 |P_{\mathcal W_\text{ad}}(\nabla \hat\calJ(\ux))_i|<10^{-6}$. Furthermore, the time interval $[0,1]$ is again discretized by $1000$ equidistant time steps. For the dynamics \eqref{eq:cost_time} we choose $\lambda=0.001$. The optimization approach used all $50$ gradient iterations, i.e., it did not converge, which we expect due to the barrier term. However, there are no significant changes to the cost after around $25$ iterations, see Figure~\ref{fig:hatDiamCost}. Note that the red line at $1000$ in Figure~\ref{fig:hatDiamCost} corresponds to the optimal costs.  
The control $\ux$ after the first iteration is depicted in Figure~\ref{fig:hatDiamw1stOT}. We observe that in this first iteration
the optimizer redirects flow from the upper path (cyan, dotted) to the lower path (magenta) without taking into account that after $t=\frac{1}{2}$, i.e., $500$ time steps, the flow should be redirected back to the upper path. This is not the case for the optimized control $\overline{u}^x$, where the control shifts the flow back to the upper path after initially redirecting it to the lower path, see Figure~\ref{fig:hatDiamwOptOT}. Moreover, the optimized flow values $\bar x$ for the times $t\in\{\frac{1}{2}, 1\}$ are as expected, i.e., $\bar x(\frac{1}{2})\approx(0.0011,3.9989,0.0011,3.9989)^\top$ and $\bar x(1)\approx(3.9984,0.0016,3.9984,0.0016)^\top$, see also Figure~\ref{fig:hatDiamxOptOT}. This nicely captures the expected behaviour for this hat-type cost functional.

To summarize, the presented approach yields satisfactory results for small, time-dependent network flow problems for both cost functions. 

\FloatBarrier

\section{Conclusion}
In this article we recast (static and dynamic) minimum cost flow problems in an optimal control framework for general nonlinear port-Hamiltonian systems of ODE-type.  The well-posedness of optimal control problems constrained by PHS is established and the first-order optimality system is derived. Based on this, a gradient descent algorithm exploiting the adjoint formulation is proposed. A special case of the discussed framework is given by (static and dynamic) minimum cost flow problems. Here the skew-symmetric matrix contains the information of the incidence matrix which defines the network at hand. We validate the approach with the help of simple static and dynamic network flow problems. 

In future work, we plan to extend the concept to more involved network flow problems and gain a deeper understanding of the relationship of optimal control for PHS on graphs and dynamic network flow problems. Intrinsic features of PHS, such as the decomposition into subnetworks will be exploited to speed up the algorithms. Moreover, dissipation of flow that may occur, for example, due to leaking pipes, can easily be integrated into the model in the PHS framework.

\section{Appendix}

\subsection{Proof of Theorem~\ref{thm:boundedness_state}}\label{app:boundedness_state}
Let $Q\in\R^{n\times n}$ such that $Q>0$, let $J,R \in \text{Lip}_\text{loc}(\R^n, \R^{n\times n})$ with $ J(z)^\top = -J(z)$ for all $z \in \R^n$, let $R(z) \ge 0$ for all $z \in \R^n$, let $B\in \text{Lip}_\text{loc}(\R^n, \R^{n\times m})$, and let $u \in L^2(0,T;\R^m)$.
We want to show that the state solution $z$ to \eqref{eq:phs} is bounded by the control $w$, i.e., there is a constant $C>0$ such that
$\| z \|_{H^1(0,T;\R^n)} \leq C \,\| w \|_W$.

To show this, we first find a bound on the $L^2$-norm of $z$. Note that the regularity of $z$ shown in Theorem~\ref{thm:existence_state} implies the embedding $z\in C([0,T],\R^n)$ and in particular yields the existence of a constant $C_0>0$ such that 
\[
\max\limits_{s\in [0,T]} \Bigl( |J(z(s))| + |R(z(s))| + |B(z(s))| \Bigr) \le C_0.
\]
Using the Young inequality and the Jensen inequality \cite{alt2016linear} we find
\begin{align*}
	|z(t)|^2 &= 2|\zin|^2 + 2 |\int_0^t \bigl(J(z(s)) - R(z(s)) \bigr)\, Q\, z(s) + B(z(s))\, u(s) \dd s |^2, \\
	&\le 2|\zin|^2 + 4 T \int_0^t |\bigl(J(z(s)) - R(z(s)) \bigr)\, Q\, z(s)|^2 + |B(z(s))\, u(s)|^2 \dd s \\
	&\le  2|\zin|^2 + 4 T C_0^2 \left( |Q|^2 \int_0^t |z(s)|^2 \dd s + \int_0^T |u(s)|^2 \dd s \right)
\end{align*}
for all $t\in[0,T]$. An application of the Gronwall inequality \cite{evans1998partial}
gives us
\[
|z(t)|^2 \le \alpha e^{4TC_0^2 |Q|^2 t}
\]
where 
\[\alpha = 2\, \,|\zin|^2 + 4\, T \, C_0^2 \int_0^T  |u(s)|^2 ds.
\]
Integration over $[0,T]$ yields the existence of $C_1>0$ such that 
\[
\| z \|_{L^2(0,T;\R^n)}^2 \leq C_1 \big( |\zin|^2 + \| u \|_{L^2(0,T;\R^m)}^2 \big) .
\]

Analogously, we obtain for the derivative 
\begin{align*}
	\Bigl| \frac{\dd}{\dd t }z(t) \Bigr|^2 
	&=  \bigl|\bigl(J(z) - R(z) \bigr)\, Q\, z + B(z)\, u(t) \bigr|^2 \\
	&\leq  2\, C_0^2 \, |Q|^2 \,|z(t)|^2 + 2\, C_0^2\, |u(t) |^2  \\
	&=  2\, C_0^2 \, |Q|^2 \Bigl|\int_0^t \frac{\dd}{\dd s}z(s) \dd s \Bigr|^2 + 2\, C_0^2\, |u(t)|^2   \\
	&\leq  2\, C_0^2 \, |Q|^2\, T\int_0^t \Bigl|\frac{\dd}{\dd s} z(s) \Bigr|^2 \dd s + 2\, C_0^2\, |u(t) |^2 . 
\end{align*}
Again, with Gronwall we obtain a constant $C_2>0$ such that
\[
\Bigl\| \frac{\dd}{\dd t} z \Bigr\|_{L^2(0,T;\R^n)}^2  \leq C_2 \, \| w \|_W^ 2. 
\]
Adding the two bounds and taking the square root on both sides  yields the desired result.

\subsection{Proof of Theorem~\ref{thm:optimal_input}}\label{app:optimal_input}
By assumption, $\lambda>0$, or $\Wad \subset W$ is closed and bounded. Moreover, $J, R, B$ are weakly continuous, $\cost(z,z_\text{des})$ is continuous and convex w.r.t.~$z$, and $\cost_T(z,z_\text{des})$ is continuous and convex w.r.t.~$z(T)$. 
We want to show that, under these assumptions, the optimal control problem \eqref{eq:opt_problem} has a solution.

To prove this, we follow the lines of \cite{hinze09opt,troeltzsch10optimal} and consider a minimizing sequence $\{w_n\}_{n\in \mathbb N} \subset \Wad$, that means $w_n = (u_n, \zin_n)$ with $$\inf\limits_{w} \hat \calJ(w) = \lim\limits_{n\rightarrow\infty} \hat \calJ(w_n).$$
Either $\lambda>0$ or the boundedness of $\Wad$ ensures the boundedness of $\{w_n\}_n.$ Note that $W$ is a reflexive Hilbert space, hence there exists a weakly convergent subsequence $\{w_{n_k}\}_{n_k}$ with $w_{n_k} \rightharpoonup \bar w \in W$ as $k\rightarrow \infty$. 
We denote $\bar w = (\bar u, \bar \zin)$ in the following. Theorem~\ref{thm:boundedness_state} yields the boundedness of $$\{ S(w_{n_k}) \} \subset H^1(0,T;\R^n)$$
and by reflexivity of the space we obtain the existence of $\bar z$ such that $S(w_{n_k}) =: z_{n_k} \rightharpoonup \bar z$ as $k\rightarrow \infty$. We emphasize that at this stage it is not clear that $\bar z$ is the state solution for the control $\bar w$. We prove this in the following. Indeed, the  weak continuity of $J,R$ and $B$ and the weak convergence of $z_{n_k}$ in $H^1(0,T;\R^n)$ allow us to obtain
\begin{align*}
	&\int_0^T \varphi(t) \, \frac{\dd}{\dd t} \big( z_{n_k}(t) - \bar z(t) \big) \dd t \longrightarrow 0, \\
	&\int_0^T \varphi(t)\, \bigl(J(z_{n_k}(t)) - J(\bar z(t)) - \bigl(R(z_{n_k}(t))- R(\bar z(t)) \bigr)\bigr)\, Q\, z_{n_k}(t) \dd t \longrightarrow 0, \\
	&\int_0^T \varphi(t)  \bigl(J(\bar z(t))  - R(\bar z(t)) \bigr) \bigl(z_{n_k}(t) - \bar z(t) \bigr) \dd t \longrightarrow 0,\\ 
	&\int_0^T \varphi(t)\, B(z_{n_k}(t))\, \bigl(w_{n_k}(t) - \bar w(t) \bigr) + \bigl(B(z_{n_k}(t)) - B(\bar z(t)) \bigr)\, \bar w(t) \dd t \longrightarrow 0,
\end{align*}
for all test functions $\varphi \in L^2(0,T;\R^n)$, which implies the weak continuity of the state operator $e$, i.e.~$e(z_{n_k},w_{n_k}) \rightharpoonup e(\bar z,\bar w)$. By the weak lower semi-continuity of the norm we find 
\begin{equation*}
	0\le \| e(\bar z, \bar w) \| = \liminf\limits_{k\rightarrow \infty} \| e(z_{n_k},w_{n_k})\|  = 0,
\end{equation*}
which proves that $\lim_{k\rightarrow\infty}S(w_{n_k}) = \lim_{k\rightarrow\infty} z_{n_k} = \bar z.$ 

We note that the continuity and convexity of $\cost(z,z_\text{des})$ and $\cost_T(z(T),z_\text{des}(T))$ imply the weak-lower semicontinuity of $\calJ$ and moreover of $\hat \calJ$. This allows to conclude that
\[
\hat\calJ(\bar w) \le \liminf\limits_{k\rightarrow\infty} \hat\calJ(w_{n_k}) = \inf\limits_{w} \hat \calJ(w),
\]
which proves that $\bar w$ is an optimal control.

\bibliographystyle{plain}
\bibliography{phS_networks}

\end{document}